\newtheorem{definition}[equation]{Definition}
\newtheorem{theorem}[equation]{Theorem}
\newtheorem{corollary}[equation]{Corollary}
\newtheorem{lemma}[equation]{Lemma}
\newtheorem{proposition}[equation]{Proposition}
\theoremstyle{remark}
\theoremstyle{remark}
\numberwithin{equation}{section} 
\newcommand\Spec{\operatorname{Spec}}
\renewcommand\emptyset{\varnothing}
\DeclarePairedDelimiterXPP\globalsymbol[1]{}{\langle}{\rangle}{}{\ifblank{#1}{\cdot,\cdot}{#1}}
\newcommand{\normSymbol}{N_{k(x)/k}}
\DeclarePairedDelimiterXPP\norm[1]{\normSymbol}{\lparen}{\rparen}{}{\ifblank{#1}{\cdot}{#1}}
\DeclarePairedDelimiterXPP\localsymbol[1]{}{\lparen}{\rparen}{}{\ifblank{#1}{\cdot,\cdot}{#1}}
\DeclarePairedDelimiterXPP\globalCommutator[1]{}{\lbrace}{\rbrace}{\raisebox{0pt}[1ex][1ex]{$^{\mathbb{A}_{X}}_{\mathbb{A}^{+}_{X}}$}}{\ifblank{#1}{\cdot,\cdot}{#1}}
\DeclarePairedDelimiterXPP\localCommutator[1]{}{\lbrace}{\rbrace}{^{K_{x}}_{\widehat{\O}_{X,x}}}{\ifblank{#1}{\cdot,\cdot}{#1}}
\renewcommand{\O}{{\mathcal{O}}}
\newcommand{\A}{{\mathbb{A}}}
\renewcommand{\P}{{\mathcal{P}}}
\newcommand{\Z}{{\mathbb{Z}}}
\newcommand{\Ucal}{\mathcal{U}}
\newcommand{\Q}{\mathbb{Q}}
\newcommand{\m}{\mathfrak{m}}
\renewcommand\tilde{\widetilde}
\title{Prime spectrum of the ring of adeles of a number field}
\author[Álvaro Serrano Holgado]{Álvaro Serrano Holgado\\
\\
\tiny{\emph{Department of Mathematics, University of Salamanca}}}
\email{Alvaro\_\,Serrano@usal.es}
\subjclass[2020]{Primary 13A15, 11R56; Secondary 13C13, 13J99}
\keywords{Adele ring; Spectrum; Ultrafilters; Topological rings}
\begin{document}

\maketitle

\begin{abstract}
Much is known about the adele ring of an algebraic number field from the perspective of Harmonic Analysis and Class Field Theory. However, its ring-theoretical aspects are often ignored. Here we present a description of the prime spectrum of this ring and study some of the algebraic and topological properties of these prime ideals. We also study how they behave under separable extensions of the base field and give an indication of how this study can be applied in adele rings not of number fields.
\end{abstract}


\section{Introduction}
\label{sec:intro}

Let $K$ be an algebraic number field, i.e., a finite field extension of the rational numbers $\Q$, and denote by $\mathfrak{o}_K$ the ring of algebraic integers in $K$, namely, the algebraic closure of $\Z$ in $K$.

Consider the set $X$ of places of $K$, defined as the set of equivalence classes of absolute values on $K$, where two absolute values are equivalent if one is a power of the other. We may partition $X$ into $X_\infty$, the set of Archimedean places, and $X_f$, the set of non-Archimedean or \emph{finite} places. For $\upsilon\in X$, let $K_{\upsilon}$ be the completion of $K$ with respect to any absolute value in the class of $\upsilon$, and for $\upsilon\in X_f$, let $\mathfrak{o}_{\upsilon}$ be the valuation ring of $\upsilon$, with maximal ideal $\mathfrak{m}_{\upsilon}$. We have a one-to-one correspondence between $X_{f}$ and the set of non-zero (and therefore maximal) prime ideals of $\mathfrak{o}_K$, whereby, if the place $\upsilon$ corresponds to the prime ideal $\mathfrak{p}$, then $\mathfrak{o}_{\upsilon}$ is isomorphic to the $\mathfrak{p}$-adic completion of $\mathfrak{o}_{K}$.

For a fixed finite subset $S_0 \subseteq X$ containing $X_\infty$, let
\[
	\Lambda=\{S\subseteq X \:\vert\: S\text{ is finite and }S_0\subseteq S\}.
\]
For $S\in\Lambda$, consider the product ring
\[
	\A_{K,S}=\underset{\upsilon\in S} \prod K_{\upsilon}
	\: \times \!\!
	\underset{\upsilon\in X\setminus S}\prod \!\! \mathfrak{o}_{\upsilon}.
\]
If $S_1\subseteq S_2$ with $S_1,S_2\in\Lambda$, there is a natural inclusion of rings $\A_{K,S_1}\hookrightarrow \A_{K,S_2}$ and with respect to these inclusions, the family $\{\A_{K,S}\}_{S\in\Lambda}$ is a direct system of rings. The \emph{adele ring} of $K$, which we will denote by $\A_K$, may be defined as the direct limit
\[
	\A_K=\underset{\underset{S\in\Lambda} \longrightarrow }\lim \:\: \A_{K,S}.
\]
This definition is equivalent to the usual definition of $\A_{K}$ as the restricted direct product of the fields $K_{\upsilon}$ with respect to the subrings $\mathfrak{o}_{\upsilon}$, namely the subring of $\prod K_{\upsilon}$ given by
\[
	\A_K=\left\{\alpha=(\alpha_{\upsilon})_{\upsilon\in X} \in \prod K_{\upsilon} \:\vert\: \alpha_{\upsilon}\in\mathfrak{o}_{\upsilon}\text{ for almost all }\upsilon\in X\right\},
\]

and it is clear that the definition does not depend on the choice of $S_0$: any other choice will differ from the given $S_0$ in at most a finite number of places, and this doesn't change the direct limit nor the restricted direct product.

A comprehensive study of the adele ring necessitates considering it as a topological ring. Each $K_{\upsilon}$ has the complete metric topology defined by the place $\upsilon$, and at finite places, $\mathfrak{o}_{\upsilon}$ has the induced non-archimedean topology. Giving each $\A_{K,S}$ the product topology, $\A_K$ naturally has the direct limit topology, making it a Hausdorff and locally compact topological ring. Standard references for adeles and their applications are \cite{CaFr}, \cite{Ram} or \cite{TaPhD}.

\section{Prime ideals in $\A_{K,S}$}
\label{sec:primesproduct}

To compute the prime spectrum of $\A_K$ we will first describe $\Spec(\A_{K,S})$ for $S\in\Lambda$ and then study which of these prime ideals are compatible with the morphisms in the direct system. Note that
\[
	\A_{K,S}=K_S\times \mathfrak{o}_S,
\]
where
\[
	K_S=\underset{\upsilon\in S}\prod K_{\upsilon}
	\qquad\text{ and }\qquad
	\mathfrak{o}_S=\underset{\upsilon\notin S}\prod \mathfrak{o}_{\upsilon}.
\]
The prime ideals in a product of two rings $A\times B$ are the ideals of the form $\mathfrak{p}\times B$, with $\mathfrak{p}\in\Spec(A)$, or $A\times\mathfrak{q}$, where $\mathfrak{q}\in\Spec(B)$. A prime ideal of this form is maximal (or minimal) if and only if the corresponding factor $\mathfrak{p}$ or $\mathfrak{q}$ is maximal (or minimal). This is easily generalized to finite products, such as $K_S$, so once we describe $\Spec(\mathfrak{o}_S)$ we will also obtain $\Spec(\A_{K,S})$.

We note that $\mathfrak{o}_S$ is an infinite product of Prüfer domains, so we can apply the results in \cite{Fino} to describe $\Spec(\mathfrak{o}_S)$. This involves the use of ultrafilters.

\begin{definition}\label{dfn:ultrafilter}
Let $(\mathcal{B},\wedge,\vee)$ a Boolean algebra. A nonempty subset $\Ucal\subseteq\mathcal{B}$ is called an \textbf{ultrafilter} on $\mathcal{B}$ if it satisfies the following conditions:
\begin{enumerate}
\item $0\notin\Ucal$.

\item If $A,B\in\Ucal$ then $A\wedge B\in\Ucal$.

\item If $A\in\Ucal$ and $A\leq B$, where $\leq$ is the canonical order relation on $\mathcal{B}$, then $B\in\Ucal$.

\item Given $A\in\mathcal{B}$, either $A\in\Ucal$ or $\neg A\in\Ucal$.
\end{enumerate}

If $\Omega$ is a set, an ultrafilter on $\Omega$ is an ultrafilter on its power set with the usual Boolean algebra structure $(\mathcal{P}(\Omega),\cap,\cup)$.
\end{definition}

An ultrafilter $\Ucal$ on $\Omega$ is called \textbf{principal} if there is an $\omega_0\in\Omega$ such that $\omega_0\in U$ for every $U\in\Ucal$. In this case, it is easy to check that $\Ucal$ consists exactly of the subsets of $\Omega$ containing $\omega_0$, and we say that it is the principal ultrafilter generated by $\omega_0$. One can also prove that an ultrafilter on $\Omega$ is principal if and only if it contains some finite subset of $\Omega$. Therefore, an ultrafilter is non-principal (or \textit{free}) if and only if it contains every cofinite subset of $\Omega$.

We will be considering ultrafilters on the set $\Omega_S=X\setminus S$. By~\cite{Fino}, for every ultrafilter $\Ucal$ on $\Omega_S$, the set
\[
	\mathfrak{M}_{S,\:\Ucal}=\left\{ \alpha\in\mathfrak{o}_S \:\vert\: \{\upsilon\in\Omega_S\:\vert\: \alpha_{\upsilon}\in\m_{\upsilon}\}\in\Ucal\right\}
\]
is a maximal ideal in $\mathfrak{o}_S$. This gives us a one-to-one correspondence between ultrafilters on $\Omega_S$ and maximal ideals of $\mathfrak{o}_S$.
For the principal ultrafilter $\Ucal=\Ucal_{\upsilon_0}$ corresponding to the place $\upsilon_0\in \Omega_S$, we have
\[
	\mathfrak{M}_{S,\:\Ucal_{\upsilon_0}}=\mathfrak{M}_{S,\upsilon_0}=\{\alpha\in\mathfrak{o}_S \:\vert\: \alpha_{\upsilon_0}\in\m_{\upsilon_0}\}.
\]
Minimal prime ideals in $\mathfrak{o}_S$ also correspond to ultrafilters on $\Omega_S$: given an ultrafilter $\Ucal$, we assign to it the minimal prime ideal
\[
	\mathfrak{n}_{S,\:\Ucal}=\left\{\alpha\in\mathfrak{o}_S \:\vert\: \{\upsilon\in \Omega_S \:\vert\: \alpha_{\upsilon}=0\}\in\Ucal \right\}.
\]
For the principal ultrafilter $\Ucal=\Ucal_{\upsilon_0}$, we now have
\[
	\mathfrak{n}_{S,\:\Ucal_{\upsilon_0}}=\mathfrak{n}_{S,\upsilon_0}=\{\alpha\in\mathfrak{o}_S \:\vert\: \alpha_{\upsilon_0}=0\}.
\]

For any ultrafilter $\Ucal$, $\mathfrak{n}_{S,\:\Ucal}$ is the only minimal prime ideal contained in the maximal ideal $\mathfrak{M}_{S,\:\Ucal}$, and $\mathfrak{M}_{S,\:\Ucal}$ is the only maximal ideal containing the minimal prime ideal $\mathfrak{n}_{S,\:\Ucal}$. Therefore, any prime ideal in $\mathfrak{o}_S$ contains exactly one minimal prime ideal and is contained in exactly one maximal ideal.

Given an ultrafilter $\Ucal$ on $\Omega_S$ and an element $\beta=(\beta_{\upsilon})\in(\Ucal)^{\mathfrak{o}_S}$, the set
\[
	\mathfrak{p}_{S,\:\Ucal}^{\beta}=
	\{\alpha\in\mathfrak{o}_S \:\vert\: \exists Y\in\Ucal,\: n\in\mathbb{N}\text{ s.t. } \upsilon(\alpha_{\upsilon}^n)\geq \upsilon(\beta_{\upsilon})\:\:\forall \upsilon\in Y\}
\]
is a prime ideal of $\mathfrak{o}_S$ contained in $\mathfrak{M}_{S,\:\Ucal}$ (note that this definition and notation is not the same as that of \cite{Fino}, but it is easy to check that for this case, in which all the rings in the product are discrete valuation rings, both definitions are the same). It is also the smallest prime ideal in $\mathfrak{M}_{S,\:\Ucal}$ that contains $\beta$. Furthermore, if $\Ucal=\Ucal_{\upsilon_0}$ for some $\upsilon_0\in S$, it follows from the definition that
\[
	\mathfrak{p}_{S,\:\Ucal_{\upsilon_0}}^{\beta}=\begin{cases}
	\mathfrak{M}_{S,\upsilon_0}\quad\text{ if }\beta_{\upsilon_0}\in \mathfrak{m}_{\upsilon_0}\setminus\{0\} &\\
	\mathfrak{n}_{S,\upsilon_0}\quad\text{ if }\beta_{\upsilon_0}\in\mathfrak{o}_{\upsilon_0}^*\cup\{0\}\end{cases}
\]
Now, if $\mathfrak{p}$ is any prime ideal of $\mathfrak{o}_S$ contained in $\mathfrak{M}_{S,\:\Ucal}$, we have that
\[
	\mathfrak{p}=\underset{\beta\in\mathfrak{p}}\bigcup \mathfrak{p}_{S,\:\Ucal}^\beta
\]
This shows that for $\upsilon_0\in \Omega_S$, the only prime ideal strictly contained in $\mathfrak{M}_{S,\upsilon_0}$ is $\mathfrak{n}_{S,\upsilon_0}$.

We can use the previous remarks to describe $\Spec(\A_{K,S})$ completely:

\begin{theorem}\label{thm:primeidealsinAXS}
The prime ideals in $\A_{K,S}$ are the following:
\begin{enumerate}
\item For $\upsilon_0\in X$, the set
\[
	M_{S,\upsilon_0}^{(0)}=\{\alpha\in\A_{K,S} \:\vert\: \alpha_{\upsilon_0}=0\},
\]
which is both a maximal and minimal prime ideal if $\upsilon_0\in S$ and only a minimal prime ideal if $\upsilon_0\in\Omega_S$. It is a principal ideal generated by $\alpha=(\alpha_{\upsilon})$ where $\alpha_{\upsilon}\in\mathfrak{o}_{\upsilon}^*$ if $\upsilon\neq \upsilon_0$ and $\alpha_{\upsilon_0}=0$.

\item If $\upsilon_0\in\Omega_S$, the set
\[
	M_{S,\upsilon_0}^{\m}=\{\alpha\in\A_{K,S} \:\vert\: \alpha_{\upsilon_0}\in\m_{\upsilon_0}\},
\]
which is a maximal prime ideal generated by $\alpha=(\alpha_{\upsilon})$ where $\alpha_{\upsilon}\in\mathfrak{o}_{\upsilon}^*$ if $\upsilon\neq\upsilon_0$ and $\alpha_{\upsilon_0}$ is a generator of $\m_{\upsilon_0}$.

\item If $\Ucal$ is a non-principal ultrafilter in $\Omega_S$, we have the maximal prime ideal
\[
	M_{S,\:\Ucal}^{\m}=\{\alpha\in\A_{K,S} \:\vert\: \{\upsilon\in\Omega_S\:\vert\: \alpha_{\upsilon}\in\m_{\upsilon}\}\in\Ucal\}
\]
and the minimal prime ideal
\[
	M_{S,\:\Ucal}^{(0)}=\{\alpha\in\A_{K,S} \:\vert\: \{\upsilon\in\Omega_S\:\vert\: \alpha_{\upsilon}=0\}\in\Ucal\}
\]
Neither of these are principal ideals.

\item If $\mathfrak{p}$ is any other prime ideal of $\A_{K,S}$, there is exactly one non-principal ultrafilter $\Ucal$ in $\Omega_S$ such that $\mathfrak{p}\subseteq M_{S,\:\Ucal}^{\m}$, and
\[
	\mathfrak{p}=\underset{\beta\in\mathfrak{p}}\bigcup \mathfrak{p}_{S,\:\Ucal}^\beta,
\]
whith $\mathfrak{p}_{S,\:\Ucal}^\beta$ defined as before. These are also not principal ideals.
\end{enumerate}
\end{theorem}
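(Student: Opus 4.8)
The plan is to exploit the decomposition $\A_{K,S}=K_S\times\mathfrak{o}_S$ together with the rule, already recalled above, that the primes of a product $A\times B$ are exactly the ideals $\mathfrak{p}\times B$ with $\mathfrak{p}\in\Spec(A)$ or $A\times\mathfrak{q}$ with $\mathfrak{q}\in\Spec(B)$, such an ideal being maximal (resp.\ minimal) precisely when its active factor is. Since $K_S=\prod_{\upsilon\in S}K_\upsilon$ is a finite product of fields, its spectrum consists of the $\lvert S\rvert$ ideals $\{\alpha\in K_S:\alpha_{\upsilon_0}=0\}$, one per $\upsilon_0\in S$, each simultaneously maximal and minimal. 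Crossing these with $\mathfrak{o}_S$ produces the ideals $M_{S,\upsilon_0}^{(0)}$ of item (1) for $\upsilon_0\in S$, automatically both maximal and minimal. Every remaining prime has the form $K_S\times\mathfrak{q}$, so the classification reduces to transcribing the description of $\Spec(\mathfrak{o}_S)$ obtained above into the notation of $\A_{K,S}$.

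First I would match each prime of $\mathfrak{o}_S$ to its clause. The minimal primes $\mathfrak{n}_{S,\Ucal}$ split according to $\Ucal$: the principal case $\mathfrak{n}_{S,\upsilon_0}$ gives $M_{S,\upsilon_0}^{(0)}$ of item (1) with $\upsilon_0\in\Omega_S$, minimal but, sitting strictly below $\mathfrak{M}_{S,\upsilon_0}$, not maximal; the free case gives the minimal $M_{S,\Ucal}^{(0)}$ of item (3). Dually, the maximal primes $\mathfrak{M}_{S,\Ucal}$ yield $M_{S,\upsilon_0}^{\m}$ of item (2) and the maximal $M_{S,\Ucal}^{\m}$ of item (3). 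Any other prime of $\mathfrak{o}_S$ sits strictly between some $\mathfrak{n}_{S,\Ucal}$ and $\mathfrak{M}_{S,\Ucal}$; by the remarks preceding the statement the containing maximal ideal, hence $\Ucal$, is unique, this $\Ucal$ must be free (no prime lies strictly between $\mathfrak{n}_{S,\upsilon_0}$ and $\mathfrak{M}_{S,\upsilon_0}$ in the principal case), and $\mathfrak{q}=\bigcup_{\beta\in\mathfrak{q}}\mathfrak{p}_{S,\Ucal}^{\beta}$, which is item (4). This exhausts $\Spec(\mathfrak{o}_S)$ and, by the product rule, all of $\Spec(\A_{K,S})$.

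For the principality assertions in (1) and (2) I would exhibit the stated generator $\alpha$ and divide coordinatewise. Given $\gamma$ in the ideal, set $\delta_\upsilon=\gamma_\upsilon\alpha_\upsilon^{-1}$ at each place where $\alpha_\upsilon$ is a unit; because $\alpha_\upsilon\in\mathfrak{o}_\upsilon^{*}$ off $\upsilon_0$, this $\delta_\upsilon$ again lies in $\mathfrak{o}_\upsilon$ (or in $K_\upsilon$ for $\upsilon\in S$), so $\delta\in\A_{K,S}$. Only the place $\upsilon_0$ needs attention: in case (1), $\gamma_{\upsilon_0}=0=\alpha_{\upsilon_0}$ and one takes $\delta_{\upsilon_0}=0$; in case (2), $\gamma_{\upsilon_0}\in\m_{\upsilon_0}=(\alpha_{\upsilon_0})$ gives $\delta_{\upsilon_0}=\gamma_{\upsilon_0}\alpha_{\upsilon_0}^{-1}\in\mathfrak{o}_{\upsilon_0}$. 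Hence $\gamma=\delta\alpha$ and the ideal is principal.

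The main obstacle is showing that the free-ultrafilter ideals of (3) and (4) are \emph{not} principal. I would first reduce to $\mathfrak{o}_S$: if $K_S\times\mathfrak{q}=(\beta_0)$ with $\beta_0=(\beta_0^{K},\beta_0^{\mathfrak{o}})$, then containment of $K_S\times\{0\}$ forces $\beta_0^{K}$ to be a unit of $K_S$, whence $(\beta_0)=K_S\times(\beta_0^{\mathfrak{o}})$ and $\mathfrak{q}=(\beta_0^{\mathfrak{o}})$ would be principal in $\mathfrak{o}_S$; so it suffices to refute principality in $\mathfrak{o}_S$. The key observation is that for free $\Ucal$ every $e\in\mathfrak{o}_S$ supported at a single place $\upsilon_1$ lies in $\mathfrak{n}_{S,\Ucal}$, since its zero set $\Omega_S\setminus\{\upsilon_1\}$ is cofinite and hence belongs to $\Ucal$; as $\mathfrak{n}_{S,\Ucal}$ is the unique minimal prime below $\mathfrak{M}_{S,\Ucal}$, such $e$ lies in every prime $\mathfrak{q}$ arising from $\Ucal$, covering all three cases uniformly. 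Now suppose $\mathfrak{q}=(\beta)$. Since $\beta\in\mathfrak{q}\subseteq\mathfrak{M}_{S,\Ucal}$, the set $A=\{\upsilon\in\Omega_S:\upsilon(\beta_\upsilon)\geq 1\}$ lies in $\Ucal$ and is therefore infinite; picking $\upsilon_1\in A$ and forming $\gamma=\beta+e$ with $e$ supported at $\upsilon_1$ and $e_{\upsilon_1}$ a unit, one gets $\gamma\in\mathfrak{q}$ while $\upsilon_1(\gamma_{\upsilon_1})=0$, yet every multiple of $\beta$ has $\upsilon_1$-valuation at least $\upsilon_1(\beta_{\upsilon_1})\geq 1$, so $\gamma\notin(\beta)$, a contradiction. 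I expect the only delicate point to be confirming that this one-coordinate perturbation stays inside the possibly small intermediate prime of item (4), which is exactly what single-place membership in $\mathfrak{n}_{S,\Ucal}$ guarantees.
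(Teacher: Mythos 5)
Your proposal is correct and follows essentially the same route as the paper, which derives the theorem directly from the product decomposition $\A_{K,S}=K_S\times\mathfrak{o}_S$ and the preceding ultrafilter description of $\Spec(\mathfrak{o}_S)$. The only addition is that you explicitly verify the (non-)principality claims, which the paper asserts without proof; your coordinatewise division for the principal cases and the single-place perturbation $\gamma=\beta+e$ (using that $\mathfrak{n}_{S,\Ucal}$ lies in every prime below $\mathfrak{M}_{S,\Ucal}$) for the free-ultrafilter cases are both sound.
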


\section{Prime ideals in a direct limit}
\label{sec:primeslimit}



Let $\{R_i,\varphi_{ij}\}_{i,j\in I}$ be a direct system of rings and $R$ its direct limit, with $\varphi_i:R_i\rightarrow R$ the canonical morphisms.

\begin{definition}\label{dfn:directideals}
A \textbf{direct system of ideals} in $\{R_i,\varphi_{ij}\}$ is a family $\{\mathfrak{a}_i\}_{i\in I}$ such that $\mathfrak{a}_i$ is an ideal of $R_i$ for every $i\in I$ and $\varphi_{ij}^{-1}(\mathfrak{a}_j)=\mathfrak{a}_i$ whenever $i\leq j$.
\end{definition}

The direct systems of ideals of $\{R_i,\varphi_{ij}\}$ are in one-to-one correspondence with the ideals of $R$: for an ideal $\mathfrak{a}\subseteq R$, we have the direct system $\{\mathfrak{a}_i=\varphi_i^{-1}(\mathfrak{a})\}$, and for a direct system $\{\mathfrak{a}_i\}$, the direct limit
\[
	\mathfrak{a}=\underset{\underset{i\in I}\longrightarrow}\lim\: \mathfrak{a}_i
\]
is an ideal of $R$ giving rise to this direct system.

It follows immediately that $\mathfrak{a}$ is a prime ideal in $R$ if an only if every $\mathfrak{a}_i$ is a prime ideal of $R_i$. That is:

\begin{proposition}\label{pro:inverselimitspec}
If $\{R_i,\varphi_{ij}\}$ is a direct system of commutative rings and $R$ its direct limit (with morphisms $\varphi_i:R_i\rightarrow R$), $\{\Spec(R_i),\varphi_{ij}^*\}$ is an inverse system of topological spaces, and its inverse limit is $\Spec(R)$, with morphisms $\varphi_i^*:\Spec(R)\rightarrow\Spec(R_i)$.
\end{proposition}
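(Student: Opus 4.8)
The plan is to verify three things: that $\Spec$ turns the given direct system of rings into an inverse system of spaces, that the induced comparison map is a bijection, and that it is a homeomorphism. The only genuinely new content beyond the preceding discussion is the last, topological, point; the bijection will be essentially handed to us by the correspondence between direct systems of ideals and ideals of $R$.

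First I would record the inverse system and the comparison map. For $i\leq j$ the ring homomorphism $\varphi_{ij}\colon R_i\to R_j$ induces the continuous map $\varphi_{ij}^*\colon\Spec(R_j)\to\Spec(R_i)$, $\mathfrak{p}\mapsto\varphi_{ij}^{-1}(\mathfrak{p})$, and contravariant functoriality of $\Spec$ gives $\varphi_{ik}^*=\varphi_{ij}^*\circ\varphi_{jk}^*$ for $i\leq j\leq k$, so $\{\Spec(R_i),\varphi_{ij}^*\}$ is an inverse system of topological spaces. Since the canonical morphisms satisfy $\varphi_j\circ\varphi_{ij}=\varphi_i$, taking $\Spec$ yields $\varphi_{ij}^*\circ\varphi_j^*=\varphi_i^*$, so the maps $\varphi_i^*\colon\Spec(R)\to\Spec(R_i)$ form a compatible cone and, by the universal property of the inverse limit, assemble into a single continuous map $\Phi\colon\Spec(R)\to\varprojlim_i\Spec(R_i)$, $\mathfrak{p}\mapsto(\varphi_i^{-1}(\mathfrak{p}))_i$.

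Next I would prove $\Phi$ is bijective. A point of $\varprojlim_i\Spec(R_i)$ is a family $(\mathfrak{p}_i)_i$ with $\varphi_{ij}^{-1}(\mathfrak{p}_j)=\mathfrak{p}_i$ for all $i\leq j$ and each $\mathfrak{p}_i$ prime; that is, precisely a direct system of ideals all of whose members are prime. By the bijection between direct systems of ideals and ideals of $R$ established above, together with the criterion that such an ideal is prime if and only if every $\mathfrak{p}_i$ is prime, the assignment $(\mathfrak{p}_i)_i\mapsto\varinjlim_i\mathfrak{p}_i$ is a two-sided inverse to $\Phi$. Finally, for the homeomorphism I would exploit the defining feature of a direct limit: every $f\in R$ equals $\varphi_i(g)$ for some index $i$ and some $g\in R_i$. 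The inverse limit carries the subspace topology from $\prod_i\Spec(R_i)$, a subbasis being the sets $(\varphi_i^*)^{-1}(D(g))$ with $g\in R_i$, where $D(g)=\{\mathfrak{q}\:\vert\: g\notin\mathfrak{q}\}$ is a basic Zariski open. Since $g\notin\varphi_i^{-1}(\mathfrak{p})$ is equivalent to $\varphi_i(g)\notin\mathfrak{p}$, pulling back along $\Phi$ identifies each such subbasic open with $D(\varphi_i(g))\subseteq\Spec(R)$; conversely, the observation that $f=\varphi_i(g)$ shows every basic open $D(f)$ of $\Spec(R)$ arises this way. Hence $\Phi$ carries the standard basis of $\Spec(R)$ bijectively onto a subbasis of the inverse limit, so it is simultaneously continuous and open, and therefore a homeomorphism.

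I expect the main obstacle to be exactly this topological bookkeeping. The algebraic bijection is immediate from the preceding correspondence, so the real work is checking that the Zariski topology on $\Spec(R)$ coincides with the inverse-limit topology. This reduces cleanly to the fact that every element, and hence every basic open, of the direct limit already appears at a finite stage; without this filtered-colimit property the two topologies need not agree, so it is the step I would be most careful to state precisely.
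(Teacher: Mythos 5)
Your proof is correct and follows the same route the paper implicitly takes: the set-theoretic bijection is exactly the correspondence between direct systems of prime ideals and prime ideals of $R$ that the paper establishes just before stating the proposition (the paper offers no further proof, asserting the result "follows immediately"). Your additional verification that the Zariski topology on $\Spec(R)$ agrees with the inverse-limit topology — via the observation that every element of $R$, hence every basic open $D(f)$, arises from a finite stage — is a genuine and correctly executed supplement to what the paper leaves unsaid.
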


\begin{definition}\label{dfn:uppmaxmin}
A direct system of prime ideals $\{\mathfrak{p}_i\}$ in $\{R_i,\varphi_{ij}\}$ is \textbf{upper-maximal} if for every $i\in I$ there is some $j\geq i$ such that $\mathfrak{p}_j$ is a maximal prime ideal in $R_j$. It is \textbf{upper-minimal} if for every $i\in I$ there is some $j\geq i$ such that $\mathfrak{p}_j$ is a minimal prime ideal in $R_j$.
\end{definition}

\begin{proposition}\label{pro:uppmaxmin}
If $\{\mathfrak{p}_i\}$ is an upper-maximal (resp., upper-minimal) direct system of prime ideals in $\{R_i,\varphi_{ij}\}$, its limit $\mathfrak{p}\subseteq R$ is a maximal (resp., minimal) prime ideal.
\end{proposition}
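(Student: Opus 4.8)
The plan is to exploit the bijection between ideals of $R$ and direct systems of ideals established above, together with the single observation that a strict inclusion of ideals, once visible at some index, persists at every larger index. Recall that $\mathfrak{p}=\varinjlim\mathfrak{p}_i$ is already known to be prime, hence proper, because each $\mathfrak{p}_i$ is prime. Write $\mathfrak{p}_i=\varphi_i^{-1}(\mathfrak{p})$ for the associated direct system, and more generally, for an ideal $\mathfrak{a}\subseteq R$ set $\mathfrak{a}_i=\varphi_i^{-1}(\mathfrak{a})$.

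First I would record two elementary facts about this correspondence. (i) For ideals $\mathfrak{a}\subseteq\mathfrak{b}$ of $R$ one has $\mathfrak{a}=\mathfrak{b}$ if and only if $\mathfrak{a}_i=\mathfrak{b}_i$ for every $i$; equivalently, $\mathfrak{a}\subsetneq\mathfrak{b}$ forces $\mathfrak{a}_{i_0}\subsetneq\mathfrak{b}_{i_0}$ for some index $i_0$. (ii) This strict inclusion then persists for every $j\geq i_0$, i.e. $\mathfrak{a}_{i_0}\subsetneq\mathfrak{b}_{i_0}$ implies $\mathfrak{a}_j\subsetneq\mathfrak{b}_j$: indeed $\mathfrak{a}_{i_0}=\varphi_{i_0 j}^{-1}(\mathfrak{a}_j)$ and $\mathfrak{b}_{i_0}=\varphi_{i_0 j}^{-1}(\mathfrak{b}_j)$ by Definition~\ref{dfn:directideals}, so $\mathfrak{a}_j=\mathfrak{b}_j$ would yield $\mathfrak{a}_{i_0}=\mathfrak{b}_{i_0}$. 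I would also note that $\mathfrak{a}=R$ if and only if $\mathfrak{a}_i=R_i$ for some (equivalently every) $i$, which is immediate from $1=\varphi_i(1)$.

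With these in hand both assertions follow by the same argument. For the upper-maximal case, suppose $\mathfrak{p}\subseteq\mathfrak{q}\subsetneq R$ with $\mathfrak{q}$ an ideal and, aiming at a contradiction, assume $\mathfrak{p}\subsetneq\mathfrak{q}$. By (i) there is an $i_0$ with $\mathfrak{p}_{i_0}\subsetneq\mathfrak{q}_{i_0}$; choosing $j\geq i_0$ with $\mathfrak{p}_j$ maximal (possible by upper-maximality) and applying (ii) gives $\mathfrak{p}_j\subsetneq\mathfrak{q}_j$, while $\mathfrak{q}_j\neq R_j$ because $\mathfrak{q}\neq R$. The chain $\mathfrak{p}_j\subsetneq\mathfrak{q}_j\subsetneq R_j$ contradicts the maximality of $\mathfrak{p}_j$, so $\mathfrak{q}=\mathfrak{p}$ and $\mathfrak{p}$ is maximal. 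The upper-minimal case is dual: given a prime $\mathfrak{p}'$ with $\mathfrak{p}'\subsetneq\mathfrak{p}$, pick $i_0$ with $\mathfrak{p}'_{i_0}\subsetneq\mathfrak{p}_{i_0}$ and $j\geq i_0$ with $\mathfrak{p}_j$ minimal; then $\mathfrak{p}'_j\subsetneq\mathfrak{p}_j$ with $\mathfrak{p}'_j$ prime (as $\mathfrak{p}'$ is prime), contradicting the minimality of $\mathfrak{p}_j$.

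The only real content is the propagation fact (ii), and the main thing to watch is the direction of the inclusions: what makes the argument work is that the \emph{lower} ideals are pullbacks of the \emph{higher} ones, so a strict inclusion can never collapse as the index grows, whereas it could a priori become visible only from some index onward. The upper-maximal and upper-minimal hypotheses are then used precisely to place a maximal (resp. minimal) member of the system above the index $i_0$ at which the strict inclusion first appears. I expect no further obstacle; the verification of (i) and of the criterion $\mathfrak{a}=R\Leftrightarrow\mathfrak{a}_i=R_i$ are routine consequences of the correspondence already proved.
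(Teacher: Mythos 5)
Your proof is correct and follows essentially the same route as the paper: both rest on the correspondence between ideals of $R$ and direct systems $\{\varphi_i^{-1}(\mathfrak{a})\}$, together with the fact that equality of ideals at an index $j$ pulls back to equality at every $i\leq j$ (your ``persistence of strictness'' is the contrapositive of this). The only cosmetic difference is that the paper starts from a maximal ideal $\mathfrak{m}\supseteq\mathfrak{p}$ and shows $\mathfrak{p}_j=\mathfrak{m}_j$ at a maximal index, whereas you argue by contradiction from an arbitrary proper ideal containing $\mathfrak{p}$; the substance is identical.
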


\begin{proof}
We will prove the maximal case. Let $\m$ be a maximal ideal in $R$ containing $\mathfrak{p}$ and take $\mathfrak{q}_i=\varphi_i^{-1}(\m)$ for every $i\in I$, which obviously contains $\mathfrak{p}_i$. For every $i\in I$ there is some $j\geq i$ for which $\mathfrak{p}_j$ is maximal in $R_j$, therefore $\mathfrak{p}_j=\mathfrak{q}_j$. It follows that $\mathfrak{p}_i=\mathfrak{q}_i$ for every $i\in I$, and therefore $\mathfrak{p}=\m$.
\end{proof}

\section{Prime ideals in the adele ring}
\label{sec:primesadeles}

We now know that $\Spec(\A_K)$ is the inverse limit of the inverse system of topological spaces $\{\Spec(\A_{K,S})\}$. Since we have already computed $\Spec(\A_{K,S})$ for each $S\in\Lambda$, we only have to see which of those ideals form direct systems of prime ideals.

\begin{lemma}\label{lem:ultrafiltersfinitepoints}
Let $\Omega$ be a set, $F\subseteq\Omega$ a finite subset and $\Omega_F=\Omega\setminus F$. There is a one-to-one correspondence between the ultrafilters on $\Omega_F$ and the ultrafilters on $\Omega$ not generated by a point in $F$.
\end{lemma}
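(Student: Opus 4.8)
The plan is to exhibit the correspondence by two explicit, mutually inverse maps, treating the principal and non-principal cases simultaneously rather than separately. The intuition is that deleting the finite set $F$ can only destroy the principal ultrafilters concentrated at points of $F$, so the sought-after bijection should be induced by the obvious operations of extending along, and restricting to, the inclusion $\Omega_F \hookrightarrow \Omega$. Throughout I will invoke the characterization recalled before the statement: an ultrafilter is principal if and only if it contains a finite subset, and a principal ultrafilter consists precisely of the sets containing its generator.

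First I would define the forward map. Given an ultrafilter $\Ucal$ on $\Omega_F$, set $\Ucal' = \{A \subseteq \Omega \mid A \cap \Omega_F \in \Ucal\}$. I would verify the four axioms of Definition~\ref{dfn:ultrafilter} directly: $\emptyset \cap \Omega_F = \emptyset \notin \Ucal$; closure under meets holds since $(A \cap B) \cap \Omega_F = (A \cap \Omega_F) \cap (B \cap \Omega_F)$; upward closure holds since $A \subseteq B$ forces $A \cap \Omega_F \subseteq B \cap \Omega_F$; and the dichotomy axiom follows because the complement of $A \cap \Omega_F$ inside $\Omega_F$ is exactly $(\Omega \setminus A) \cap \Omega_F$. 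Finally I would check that $\Ucal'$ is not generated by any $\omega_0 \in F$: otherwise $\{\omega_0\} \in \Ucal'$, i.e. $\emptyset = \{\omega_0\} \cap \Omega_F \in \Ucal$, which is impossible.

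Next I would define the inverse map by restriction. For an ultrafilter $\mathcal{V}$ on $\Omega$ not generated by a point of $F$, set $\mathcal{V}_F = \{A \in \mathcal{V} \mid A \subseteq \Omega_F\}$. The main obstacle, and the only point where finiteness of $F$ enters, is the preliminary fact that $\Omega_F \in \mathcal{V}$. If instead $F \in \mathcal{V}$, then $\mathcal{V}$ would contain a finite set and hence be principal, generated by one of its points, which must lie in $F$ since that point belongs to every element of $\mathcal{V}$; this contradicts the hypothesis. Thus $F \notin \mathcal{V}$ and $\Omega_F = \Omega \setminus F \in \mathcal{V}$. Granting this, the ultrafilter axioms for $\mathcal{V}_F$ are routine, the dichotomy using that $\Omega \setminus A \in \mathcal{V}$ implies $(\Omega \setminus A) \cap \Omega_F = \Omega_F \setminus A \in \mathcal{V}$.

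To finish I would check the two maps are mutually inverse. For $A \subseteq \Omega_F$ one has $A \cap \Omega_F = A$, so $(\Ucal')_F = \Ucal$ at once. Conversely, since $\Omega_F \in \mathcal{V}$, closure under meets and upward closure give the equivalence $A \in \mathcal{V} \iff A \cap \Omega_F \in \mathcal{V}$ for every $A \subseteq \Omega$, whence $(\mathcal{V}_F)' = \mathcal{V}$. This yields the claimed bijection; as a sanity check, one sees that principal ultrafilters generated by $\omega_0 \in \Omega_F$ correspond to the principal ultrafilter on $\Omega$ generated by the same $\omega_0$, so the non-principal ultrafilters on $\Omega_F$ match exactly the free ultrafilters on $\Omega$.
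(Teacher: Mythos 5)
Your proof is correct and follows essentially the same route as the paper: the same extension map $\Ucal\mapsto\{A\subseteq\Omega \mid A\cap\Omega_F\in\Ucal\}$ and a restriction map that, once you know $\Omega_F\in\mathcal{V}$, coincides with the paper's trace $\{Y\cap\Omega_F \mid Y\in\mathcal{V}\}$. You simply supply more of the routine verifications (the ultrafilter axioms, the fact that $F\notin\mathcal{V}$, and the mutual-inverse checks) that the paper leaves implicit.
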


\begin{proof}
If $\tilde{\Ucal}$ is an ultrafilter on $\Omega_F$, then
\[
	\Ucal=\{Y\subseteq\Omega\:\vert\: Y\cap\Omega_F\in\tilde{\Ucal}\}
\]
is an ultrafilter on $\Omega$ which is principal or non-principal if the same holds true for $\tilde{\Ucal}$. If they are principal, they have the same generator, so $\Ucal$ is not generated by a point in $F$.

If $\Ucal$ is now an ultrafilter on $\Omega$ not generated by a point in $F$, then
\[
	\Ucal_{F}=\{Y\cap\Omega_F \:\vert\: Y\in\Ucal\}
\]
is an ultrafilter on $\Omega_F$. These mappings give us the desired correspondence.
\end{proof}

We can apply lemma \ref{lem:ultrafiltersfinitepoints} to the situation $\Omega_{S_2}\subseteq\Omega_{S_1}$, where $S_1,S_2\in\Lambda$ and $S_1\subseteq S_2$. However, it will only help us in the case of ideals given by non-principal ultrafilters, since for any $\upsilon_0\in X$ and any $S\in\Lambda$ there will always be $S\subseteq S'$ with $\upsilon_0\in S'$ (any $S'\in\Lambda$ containing $S\cup\{\upsilon_0\}$).

Let us now look for direct systems of prime ideals among the principal prime ideals.

For any $\upsilon_{0}$, the ideals $\{M_{S,\upsilon_0}^{(0)}\}_{S\in\Lambda}$ form a direct system of prime ideals which is both upper-maximal and upper-minimal. Its direct limit is the ideal of $\A_K$
\[
	M_{\upsilon_0}=\{\alpha\in\A_K \:\vert\: \alpha_{\upsilon_0}=0\},
\]
which is both a maximal and minimal prime ideal, principal and generated by $\alpha=(\alpha_{\upsilon})$, where $\alpha_{\upsilon}\in\mathfrak{o}_{\upsilon}^*$ if $\upsilon\neq\upsilon_0$ and $\alpha_{\upsilon_0}=0$.

If $S\in\Lambda$ and $\upsilon_0\in\Omega_S$, the ideal $M_{S,\upsilon_0}^{\m}$ of $\A_{K,S}$ cannot be part of a direct system of ideals, since for $S'=S\cup\{\upsilon_0\}$, there is no prime ideal in $\A_{K,S'}$ that restricts to $M_{S,\upsilon_0}^{\m}$ in $\A_{K,S}$.

Let us now consider a non-principal ultrafilter $\Ucal_{S_0}$ in $\Omega_{S_0}$, and for any $S\in\Lambda$, $\Ucal_{S}$ the unique ultrafilter in $\Omega_S$ corresponding to $\Ucal_{S_0}$ via the equivalence given in Lemma~\ref{lem:ultrafiltersfinitepoints}. We have the direct systems of prime ideals $\{M_{\Ucal_S,S}^{\m}\}_{S\in\Lambda}$ and $\{M_{\Ucal_S,S}^{(0)}\}_{S\in\Lambda}$, which are upper-maximal and upper-minimal, respectively. They give us the following maximal and minimal prime ideals of $\A_K$:
\[
	M_{\Ucal_{S_0}}=\{\alpha=(\alpha_{\upsilon})\in\A_K \:\vert\: \{\upsilon\in\Omega_{S_0}\:\vert\:\alpha_{\upsilon}\in\m_{\upsilon}\}\in\Ucal_{S_0}\}
\]
and
\[
	m_{\Ucal_{S_0}}=\{\alpha=(\alpha_{\upsilon})\in\A_K \:\vert\: \{\upsilon\in\Omega_{S_0}\:\vert\:\alpha_{\upsilon}=0\}\in\Ucal_{S_0}\}.
\]
It follows that any other prime ideal in $\A_K$ will be between $m_{\Ucal}$ and $M_{\Ucal}$ for some non-principal ultrafilter $\Ucal$ in $\Omega_{S_0}$. This proves as well that we have already found all maximal and minimal prime ideals in $\A_K$.

\begin{proposition}\label{pro:caracprimesAX}
Take $S\in\Lambda$, $\Ucal$ an ultrafilter on $\Omega_{S_0}$,  $\Ucal_S$ the corresponding ultrafilter on $\Omega_S$, $\tilde{\mathfrak{p}}$ a prime ideal in $\A_{K,S}$ inside $M_{S,\:\Ucal_S}^{\m}$ and $\mathfrak{p}=\tilde{\mathfrak{p}}\cap\A_{K,S_0}$. Then, $\tilde{\mathfrak{p}}$ is completely determined by $\mathfrak{p}$ as follows:
\[
	\tilde{\mathfrak{p}}=\underset{\beta\in\mathfrak{p}}\bigcup \mathfrak{p}_{S,\:\Ucal_S}^\beta
\]
\end{proposition}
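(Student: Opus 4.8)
The plan is to prove the asserted equality by a double inclusion, with essentially all the substance concentrated in the inclusion $\tilde{\mathfrak{p}}\subseteq\bigcup_{\beta\in\mathfrak{p}}\mathfrak{p}_{S,\:\Ucal_S}^\beta$; the reverse inclusion will fall out immediately from the minimality property recorded in Theorem~\ref{thm:primeidealsinAXS}.

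For the easy inclusion $\bigcup_{\beta\in\mathfrak{p}}\mathfrak{p}_{S,\:\Ucal_S}^\beta\subseteq\tilde{\mathfrak{p}}$, I would argue as follows. Since $\mathfrak{p}=\tilde{\mathfrak{p}}\cap\A_{K,S_0}\subseteq\tilde{\mathfrak{p}}$, every $\beta\in\mathfrak{p}$ lies in $\tilde{\mathfrak{p}}$, which is a prime ideal contained in $M_{S,\:\Ucal_S}^{\m}$. As $\mathfrak{p}_{S,\:\Ucal_S}^\beta$ is by construction the smallest prime ideal inside $M_{S,\:\Ucal_S}^{\m}$ containing $\beta$, we get $\mathfrak{p}_{S,\:\Ucal_S}^\beta\subseteq\tilde{\mathfrak{p}}$ for each such $\beta$, and taking the union gives the inclusion.

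The reverse inclusion is where the work lies, and the device I would use is a truncation of an arbitrary element of $\tilde{\mathfrak{p}}$ into the smaller ring $\A_{K,S_0}$ that leaves its associated prime unchanged. The crucial observation is that $\mathfrak{p}_{S,\:\Ucal_S}^\gamma$ depends only on the components $\gamma_\upsilon$ with $\upsilon\in\Omega_S$, since its defining condition ranges over subsets $Y\in\Ucal_S$ of $\Omega_S$ and involves only the valuations $\upsilon(\gamma_\upsilon)$ at such places. Concretely, let $\chi=(\chi_\upsilon)\in\A_{K,S}$ be the idempotent with $\chi_\upsilon=0$ for the finitely many $\upsilon\in S\setminus S_0$ and $\chi_\upsilon=1$ otherwise. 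Given $\gamma\in\tilde{\mathfrak{p}}$, set $\beta=\chi\gamma$; since $\tilde{\mathfrak{p}}$ is an ideal, $\beta\in\tilde{\mathfrak{p}}$. Its components vanish on $S\setminus S_0$ and agree with those of $\gamma$ at every other place, so $\beta_\upsilon\in\mathfrak{o}_\upsilon$ for all $\upsilon\notin S_0$, i.e.\ $\beta\in\A_{K,S_0}$, and therefore $\beta\in\tilde{\mathfrak{p}}\cap\A_{K,S_0}=\mathfrak{p}$.

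It then remains to verify that $\gamma\in\mathfrak{p}_{S,\:\Ucal_S}^\beta$. Because $\beta$ and $\gamma$ coincide at every place of $\Omega_S$, the observation above yields $\mathfrak{p}_{S,\:\Ucal_S}^\beta=\mathfrak{p}_{S,\:\Ucal_S}^\gamma$, and since $\gamma$ belongs to $\mathfrak{p}_{S,\:\Ucal_S}^\gamma$ (again, the smallest prime inside $M_{S,\:\Ucal_S}^{\m}$ containing it), we obtain $\gamma\in\mathfrak{p}_{S,\:\Ucal_S}^\beta$ with $\beta\in\mathfrak{p}$. Hence $\tilde{\mathfrak{p}}\subseteq\bigcup_{\beta\in\mathfrak{p}}\mathfrak{p}_{S,\:\Ucal_S}^\beta$, which together with the first inclusion gives the claimed equality. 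The main obstacle, and the point I would take most care over, is precisely the invariance $\mathfrak{p}_{S,\:\Ucal_S}^\beta=\mathfrak{p}_{S,\:\Ucal_S}^\gamma$ under truncation at the finitely many extra places of $S\setminus S_0$: this is exactly what allows the contraction $\mathfrak{p}$ to the smaller ring to recover all of $\tilde{\mathfrak{p}}$, and it rests on $\Ucal_S$ living on $\Omega_S$, so that the places of $S\setminus S_0$ never enter the defining inequalities.
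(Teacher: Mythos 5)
Your proof is correct and follows essentially the same route as the paper's: both establish the easy inclusion from the minimality of $\mathfrak{p}_{S,\:\Ucal_S}^\beta$, and both prove the reverse inclusion by truncating an arbitrary element of $\tilde{\mathfrak{p}}$ at the finitely many places of $S$ outside $S_0$ to land in $\mathfrak{p}$, using that $\mathfrak{p}_{S,\:\Ucal_S}^{(\cdot)}$ depends only on the components over $\Omega_S$. Your use of the idempotent $\chi$ to realize the truncation is a slightly tidier way of seeing that the truncated element stays in $\tilde{\mathfrak{p}}$, but the argument is the same in substance.
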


\begin{proof}
Since $\mathfrak{p}\subseteq\tilde{\mathfrak{p}}$ and
\[
	\tilde{\mathfrak{p}}=\underset{\beta\in\tilde{\mathfrak{p}}}\bigcup \mathfrak{p}_{S,\:\Ucal_S}^\beta,
\]
it is clear that
\[
	\underset{\beta\in\mathfrak{p}}\bigcup \mathfrak{p}_{S,\:\Ucal_S}^\beta\subseteq\tilde{\mathfrak{p}}.
\]
Now take $\tilde{\beta}\in\tilde{\mathfrak{p}}$ and $\tilde{Y}=\{\upsilon\in\Omega_S \:\vert\: \tilde{\beta}_{\upsilon}\in\m_{\upsilon}\}\in \Ucal_S\subseteq\Ucal$. We have that $\mathfrak{p}_{S,\:\Ucal_S}^{\tilde{\beta}}\subseteq\tilde{\mathfrak{p}}$, and by the definition of $\mathfrak{p}_{S,\:\Ucal_S}^{\tilde{\beta}}$, the element $\beta\in\A_{K,S}$ defined by $\beta_{\upsilon}=\tilde{\beta}_{\upsilon}$ if $\upsilon\in\Omega_S$ and $\beta_{\upsilon}=0$ if $\upsilon\notin S$ is also in $\mathfrak{p}_{S,\:\Ucal_S}^{\tilde{\beta}}$. Furthermore $\tilde{\beta}\in \mathfrak{p}_{S,\:\Ucal_S}^{\beta}$ by the same reasoning. Therefore,
\[
	\mathfrak{p}_{S,\:\Ucal_S}^{\tilde{\beta}}=\mathfrak{p}_{S,\:\Ucal_S}^\beta,
\]
and since $\beta\in\A_{K,S_0}$, we have that $\beta\in\mathfrak{p}$, which concludes the proof.
\end{proof}

As a consequence of Proposition~\ref{pro:caracprimesAX}, we have that any direct system of prime ideals that gives an ideal in $\A_K$ contained in $M_{\Ucal}$ is uniquely determined by the ideal in $\A_{K,S_0}$ in said system, and the prime ideal $\mathfrak{p}\subseteq M_{\Ucal}$ is
\[
	\mathfrak{p}=\underset{\beta\in\mathfrak{p}}\bigcup \:\mathfrak{p}_{\Ucal}^\beta=\underset{\beta\in\mathfrak{p}\cap\A_{K,S_0}}\bigcup \!\!\!\!\! \mathfrak{p}_{\Ucal}^{\beta}.
\]

In summary, we have:

\begin{theorem}\label{thm:specAK}
Let $K$ be a number field and $S_0$ a finite subset of $X_K$ containing $X_{K,\infty}$. Let $\Omega_{S_0}=X_K\setminus S_0$. The prime ideals of $\A_K$ are completely described as follows:
\begin{enumerate}[label=(\arabic*)]
\item The maximal ideals are of one of two kinds:
\begin{enumerate}[label=\alph*.]
\item If $\upsilon_0\in X_{K,f}$,
	\[
		M_{\upsilon_0}=\{\alpha\in\A_K\:\vert\:\alpha_{\upsilon_0}=0\},
	\]
which is also a minimal prime ideal.
\item If $\Ucal$ is a non-principal ultrafilter in $\Omega_{S_0}$,
	\[
		M_{\Ucal}=\{\alpha\in\A_K\:\vert\:\{\upsilon\in\Omega_{S_0}\:\vert\:\alpha_{\upsilon}\in\m_{\upsilon}\}\in\Ucal\}
	\]
\end{enumerate}
\item The minimal prime ideals are of one of two kinds:
\begin{enumerate}[label=\alph*.]
\item If $\upsilon_0\in X_{K,f}$,
	\[
		M_{\upsilon_0}=\{\alpha\in\A_K\:\vert\:\alpha_{\upsilon}=0\},
	\]
which is also a maximal prime ideal.
\item If $\Ucal$ is a non-principal ultrafilter in $\Omega_{S_0}$,
	\[
		m_{\Ucal}=\{\alpha\in\A_K\:\vert\:\{\upsilon\in\Omega_{S_0}\:\vert\:\alpha_{\upsilon}=0\}\in\Ucal\},
	\]
which is the only minimal prime ideal contained in $M_{\Ucal}$ and is only contained in the maximal ideal $M_{\Ucal}$.
\end{enumerate}
\item If $\mathfrak{p}\in\A_K$ is any prime ideal that isn't maximal or minimal, there is exactly one non-principal ultrafilter $\Ucal$ in $\Omega_{S_0}$ such that
	\[
		m_{\Ucal}\subset\mathfrak{p}\subset M_{\Ucal},
	\]
and
	\[
		\mathfrak{p}=\underset{\beta\in\mathfrak{p}\cap\A_{K,S_0}}\bigcup \!\!\!\!\! \mathfrak{p}_{\Ucal}^{\beta},
	\]
where
	\[
		\mathfrak{p}_{\Ucal}^{\beta}=\left\{\alpha\in\A_K\:\vert\:\exists Y\in\Ucal,\:n\in\mathbb{N}\text{ s.t. }\upsilon(\alpha_{\upsilon}^n)\geq \upsilon(\beta_{\upsilon})\:\:\:\forall\upsilon\in Y\right\}
	\]
\end{enumerate}
\end{theorem}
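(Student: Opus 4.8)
The plan is to exploit Proposition~\ref{pro:inverselimitspec}, which identifies $\Spec(\A_K)$ with the inverse limit of the spaces $\Spec(\A_{K,S})$: a prime ideal of $\A_K$ is the same datum as a direct system of prime ideals $\{\mathfrak{p}_S\}_{S\in\Lambda}$, that is, a choice of $\mathfrak{p}_S\in\Spec(\A_{K,S})$ for each $S$ with $\varphi_{SS'}^{-1}(\mathfrak{p}_{S'})=\mathfrak{p}_S$ whenever $S\subseteq S'$, the limit being recovered as $\mathfrak{p}=\bigcup_{S}\varphi_S(\mathfrak{p}_S)$. Since Theorem~\ref{thm:primeidealsinAXS} already lists every prime of every $\A_{K,S}$, the whole argument consists of running through that classification class by class, deciding which primes can sit in a compatible system, and reading off the resulting limit together with its position in the prime lattice.

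First I would dispose of the pointwise primes. For a fixed place $\upsilon_0$ the ideals $M_{S,\upsilon_0}^{(0)}$ manifestly satisfy the compatibility condition, and their system is both upper-maximal and upper-minimal, so Proposition~\ref{pro:uppmaxmin} shows the limit $M_{\upsilon_0}=\{\alpha\:\vert\:\alpha_{\upsilon_0}=0\}$ is simultaneously maximal and minimal; principality and the explicit generator descend from part (1) of Theorem~\ref{thm:primeidealsinAXS}. By contrast, for $\upsilon_0\in\Omega_S$ the maximal ideal $M_{S,\upsilon_0}^{\m}$ cannot belong to any compatible system: enlarging $S$ to $S'=S\cup\{\upsilon_0\}$ moves the coordinate $\upsilon_0$ into the field factor $K_{S'}$, where the only prime touching that coordinate contracts to $M_{S,\upsilon_0}^{(0)}$ rather than $M_{S,\upsilon_0}^{\m}$; hence these primes die in the limit and contribute nothing new.

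Next come the ultrafilter primes. Fixing a non-principal ultrafilter $\Ucal$ on $\Omega_{S_0}$, Lemma~\ref{lem:ultrafiltersfinitepoints} supplies a coherent family $\Ucal_S$ on each $\Omega_S$, non-principality being exactly what guarantees that $\Ucal$ is not generated by a point of the finite set $S\setminus S_0$, so the correspondence applies. A short ultrafilter computation using that correspondence shows $\{M_{S,\Ucal_S}^{\m}\}$ and $\{M_{S,\Ucal_S}^{(0)}\}$ are compatible systems, maximal resp.\ minimal at every level and hence upper-maximal resp.\ upper-minimal; Proposition~\ref{pro:uppmaxmin} then yields the maximal ideal $M_{\Ucal}$ and the minimal prime $m_{\Ucal}$. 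The assertions that $m_{\Ucal}$ is the unique minimal prime below $M_{\Ucal}$ and that $M_{\Ucal}$ is the unique maximal ideal above it are inherited level by level from the analogous facts for $\mathfrak{o}_S$ recorded in Section~\ref{sec:primesproduct}.

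Finally I would treat an arbitrary prime $\mathfrak{p}$ that is neither maximal nor minimal and argue completeness. Each contraction $\mathfrak{p}_S$ lies strictly between a unique minimal and a unique maximal prime of $\A_{K,S}$, and compatibility forces all of these sandwiches to come from one and the same non-principal ultrafilter $\Ucal$, giving $m_{\Ucal}\subsetneq\mathfrak{p}\subsetneq M_{\Ucal}$; the explicit description $\mathfrak{p}=\bigcup_{\beta\in\mathfrak{p}\cap\A_{K,S_0}}\mathfrak{p}_{\Ucal}^{\beta}$ is then precisely the consequence of Proposition~\ref{pro:caracprimesAX}. Exhaustiveness is automatic, since by the first paragraph every prime of $\A_K$ is a compatible system and we have examined every class of Theorem~\ref{thm:primeidealsinAXS}. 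I expect the main obstacle to be exactly this last step: verifying that the per-level data $\mathfrak{p}_{S,\Ucal_S}^{\beta}$ are mutually compatible across all $S$ and glue to the stated union, and that distinct ultrafilters produce genuinely distinct intervals $[m_{\Ucal},M_{\Ucal}]$ — this is where the uniqueness clauses of Section~\ref{sec:primesproduct} and Proposition~\ref{pro:caracprimesAX} must be combined with care.
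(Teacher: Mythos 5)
Your proposal is correct and follows essentially the same route as the paper: identify primes of $\A_K$ with compatible systems via Proposition~\ref{pro:inverselimitspec}, show the systems $\{M_{S,\upsilon_0}^{(0)}\}$ and the ultrafilter systems survive (via Lemma~\ref{lem:ultrafiltersfinitepoints} and Proposition~\ref{pro:uppmaxmin}) while $M_{S,\upsilon_0}^{\m}$ dies upon enlarging $S$, and then invoke Proposition~\ref{pro:caracprimesAX} for the intermediate primes. The paper's own argument is exactly this discussion, carried out in Section~\ref{sec:primesadeles} before the theorem is stated as a summary.
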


Observe that our description of $\Spec(\A_K)$ depends on choosing an arbitrary $S_0\subseteq X_K$ at the beginning, and the ultrafilters in our final results are ultrafilters on $\Omega_{S_0}=X_K\setminus S_0$. However, by Lemma~\ref{lem:ultrafiltersfinitepoints}, since any other choice of $S_0$ differs from the first in at most a finite number of points, this will not make a difference in the description of ultrafilters on $\Omega_{S_0}$. Hence the results given do not depend on the choice of $S_0$.

\section{Closed prime ideals}
\label{sec:clopri}

We can recover the completed fields $K_{\upsilon}$ from the adele ring by taking its quotients by the principal prime ideals: it is very easy to check that, if $\upsilon_0\in X_K$, $\A_K/M_{\upsilon_0}\simeq K_{\upsilon_0}$, since
\[
	\alpha-\beta\in M_{\upsilon_0}\Longleftrightarrow (\alpha-\beta)_{\upsilon_0}=0\Longleftrightarrow \alpha_{\upsilon_0}=\beta_{\upsilon_0}.
\]

There is a reason that these ideals work better than the ones given by non-principal ultrafilters, and it is a topological reason. In \cite{Conn}, Connes proves the following result (stated as \textbf{proposition 7.2.}):

\begin{proposition}
There is a one to one correspondence between subsets $Z\subseteq X_K$ and closed ideals of $\A_K$ given by
\[
	Z \longmapsto \mathcal{I}_Z=\{\alpha\in\A_K \:\vert\: \alpha_{\upsilon}=0\:\:\:\forall \upsilon\in Z\}.
\]
\end{proposition}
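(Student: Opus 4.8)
The plan is to exhibit an explicit inverse to the assignment $Z\mapsto\mathcal{I}_Z$, so that the correspondence is established by checking that the two composites are the identity. The first, routine step is to confirm that the map is well defined, i.e.\ that each $\mathcal{I}_Z$ really is a closed ideal: it is visibly an ideal, and writing $\mathrm{ev}_{\upsilon}\colon\A_K\to K_{\upsilon}$ for the continuous projection onto the $\upsilon$-component, one has $\mathcal{I}_Z=\bigcap_{\upsilon\in Z}\mathrm{ev}_{\upsilon}^{-1}(0)$, an intersection of closed sets since each $K_{\upsilon}$ is Hausdorff.

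For the inverse, to a closed ideal $I$ I would associate its \emph{vanishing set}
\[
	Z(I)=\{\upsilon\in X_K \:\vert\: \alpha_{\upsilon}=0 \text{ for all } \alpha\in I\},
\]
and then prove the two identities $Z(\mathcal{I}_Z)=Z$ and $\mathcal{I}_{Z(I)}=I$. The engine driving both is the family of idempotents $e_{\upsilon}\in\A_K$ that are $1$ in the $\upsilon$-component and $0$ elsewhere. The key local observation is that for a single place the image $\mathrm{ev}_{\upsilon}(I)$ is an ideal of the field $K_{\upsilon}$, hence is $0$ or all of $K_{\upsilon}$; consequently $\upsilon\notin Z(I)$ forces some $\alpha\in I$ with $\alpha_{\upsilon}\neq 0$, and multiplying $\alpha$ by the adele with $\upsilon$-component $\alpha_{\upsilon}^{-1}$ and all other components $0$ yields $e_{\upsilon}\in I$. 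Thus $e_{\upsilon}\in I$ if and only if $\upsilon\notin Z(I)$. The identity $Z(\mathcal{I}_Z)=Z$ is then immediate: $Z\subseteq Z(\mathcal{I}_Z)$ by definition, while for $\upsilon\notin Z$ the idempotent $e_{\upsilon}$ lies in $\mathcal{I}_Z$ and is nonzero at $\upsilon$, so $\upsilon\notin Z(\mathcal{I}_Z)$.

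The substantive point is $\mathcal{I}_{Z(I)}=I$, and this is where closedness of $I$ enters; the inclusion $I\subseteq\mathcal{I}_{Z(I)}$ is automatic, so the work is the reverse inclusion. Given $\gamma\in\mathcal{I}_{Z(I)}$, I would approximate it by truncations lying in $I$. For a basic neighbourhood $U=\prod_{\upsilon\in F}U_{\upsilon}\times\prod_{w\notin F}\mathfrak{o}_{w}$ of $\gamma$, with $F$ finite and large enough that $\gamma_{w}\in\mathfrak{o}_{w}$ for $w\notin F$, set $T=F\setminus Z(I)$ and consider $\bigl(\sum_{\upsilon\in T}e_{\upsilon}\bigr)\gamma$, which lies in $I$ since each $e_{\upsilon}\in I$ for $\upsilon\in T$. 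This truncation agrees with $\gamma$ on $T$ and is zero off $T$; the only places of $F$ where it might fail to match $\gamma$ are those in $F\cap Z(I)$, but there $\gamma$ itself vanishes, so the truncation still lies in $U$, while off $F$ both $\gamma_{w}$ and $0$ are integral. Hence every neighbourhood of $\gamma$ meets $I$, so $\gamma\in\overline{I}=I$.

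The main obstacle I anticipate lies entirely in this last paragraph: correctly matching the approximating truncations to a neighbourhood basis of the restricted-product topology, in particular handling the places in $F\cap Z(I)$ (where one must drop the idempotent from the sum and invoke $\gamma_{\upsilon}=0$) and verifying integrality of all components outside $F$. Everything else reduces to the single fact that each $K_{\upsilon}$ is a field, combined with the abundance of idempotents in $\A_K$.
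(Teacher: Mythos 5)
Your proof is correct. Note that the paper does not actually prove this proposition --- it is quoted verbatim from Connes--Consani (Proposition 7.2 of \cite{Conn}) --- so there is no internal argument to compare against; your write-up supplies a complete, self-contained proof along the standard lines: the idempotents $e_{\upsilon}$ detect the vanishing set (using that each $K_{\upsilon}$ is a field), the identity $Z(\mathcal{I}_Z)=Z$ gives injectivity, and the truncation-plus-closure argument gives surjectivity onto closed ideals. The only point worth making explicit in the final paragraph is that the finite set $F$ must also contain all Archimedean places (where no $\mathfrak{o}_w$ exists), which is already implicit in your requirement that $\gamma_w\in\mathfrak{o}_w$ for $w\notin F$.
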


Given our description of the prime ideals of $\A_K$, this implies that the only closed prime ideals in $\A_K$ are the maximal prime ideals $M_{\upsilon_0}$, for $\upsilon_0\in X_K$ (since $\mathcal{I}_Z\subseteq M_{\upsilon}$ for any $\upsilon\in Z$).

Connes stops his exploration of $\Spec(\A_K)$ there. However, our previous exhaustive description of $\Spec(\A_K)$ allows us to prove the following:

\begin{proposition}\label{densepri}
If $\Ucal$ is a non-principal ultraflter in $\Omega_{S_0}=X_K\setminus S_0$, the minimal prime ideal $m_{\Ucal}$ defined by $\Ucal$ is dense in $\A_K$. Consequently, every prime ideal of $\A_K$ not of the form $M_{\upsilon_0}$ for $\upsilon_0\in X_K$ is dense in $\A_K$, because its closure is an ideal that contains $1$.
\end{proposition}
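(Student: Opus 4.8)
The plan is to prove density of $m_{\Ucal}$ by producing, inside any basic open set, an explicit ``truncated'' adele lying in $m_{\Ucal}$, and then to leverage the fact that the topological closure of an ideal is again an ideal in order to dispatch the general statement cheaply.

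First I would recall the restricted-product topology: a basic open neighbourhood of a point $\gamma\in\A_K$ has the form $\gamma+\prod_{\upsilon}V_{\upsilon}$, where each $V_{\upsilon}$ is an open neighbourhood of $0$ in $K_{\upsilon}$ and $V_{\upsilon}=\mathfrak{o}_{\upsilon}$ for all but finitely many $\upsilon$. The key structural observation is that a free ultrafilter contains every cofinite subset of $\Omega_{S_0}$, so any adele vanishing outside a finite set automatically lies in $m_{\Ucal}$: if $\alpha_{\upsilon}=0$ for all $\upsilon\notin T$ with $T$ finite, then $\{\upsilon\in\Omega_{S_0}\mid\alpha_{\upsilon}=0\}\supseteq\Omega_{S_0}\setminus T$ is cofinite in $\Omega_{S_0}$, hence belongs to $\Ucal$ by the upward closure of ultrafilters, giving $\alpha\in m_{\Ucal}$.

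The cleanest route is then to show $1\in\overline{m_{\Ucal}}$. Given a neighbourhood $1+\prod_{\upsilon}V_{\upsilon}$, let $T$ be a finite set containing $S_0$ together with the finitely many places where $V_{\upsilon}\neq\mathfrak{o}_{\upsilon}$, and define $\alpha$ by $\alpha_{\upsilon}=1$ for $\upsilon\in T$ and $\alpha_{\upsilon}=0$ otherwise. Then $\alpha\in\A_K$; it vanishes off the finite set $T$, so $\alpha\in m_{\Ucal}$ by the observation above; and $\alpha-1$ equals $0\in V_{\upsilon}$ at the places of $T$ and $-1\in\mathfrak{o}_{\upsilon}=V_{\upsilon}$ at the remaining places, so $\alpha$ lies in the prescribed neighbourhood. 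Since every neighbourhood of $1$ meets $m_{\Ucal}$, we obtain $1\in\overline{m_{\Ucal}}$. As $\A_K$ is a topological ring, $\overline{m_{\Ucal}}$ is an ideal, and an ideal containing the unit $1$ is the whole ring; hence $\overline{m_{\Ucal}}=\A_K$ and $m_{\Ucal}$ is dense. (Equivalently, one may approximate an arbitrary $\gamma$ by truncating $\gamma$ itself to a large enough finite $T$, the same vanishing argument placing the truncation in $m_{\Ucal}$.)

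For the consequence I would invoke Theorem~\ref{thm:specAK}: every prime ideal $\mathfrak{p}$ other than the $M_{\upsilon_0}$ is one of $m_{\Ucal}$, $M_{\Ucal}$, or an intermediate ideal with $m_{\Ucal}\subset\mathfrak{p}\subset M_{\Ucal}$, so in each case $\mathfrak{p}\supseteq m_{\Ucal}$ for some non-principal $\Ucal$. Then $\overline{\mathfrak{p}}\supseteq\overline{m_{\Ucal}}=\A_K$, whence $\overline{\mathfrak{p}}=\A_K$ and $\mathfrak{p}$ is dense. I do not expect a serious obstacle here: the only points requiring care are the correct handling of the restricted-product topology and the verification that the truncated adele genuinely lands in the given basic neighbourhood; the ultrafilter side is immediate once one notes that freeness forces every cofinite set into $\Ucal$.
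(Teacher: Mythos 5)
Your proof is correct and follows essentially the same route as the paper: the paper exhibits the net $\{\alpha^V\}_{V\in\Ucal}$ with $\alpha^V$ equal to $0$ on $V$ and $1$ off $V$, converging to $1$, and your truncated adele supported on the finite set $T$ is precisely the term $\alpha^{V}$ for the cofinite set $V=\Omega_{S_0}\setminus T$, with the same appeal to freeness of $\Ucal$ to place it in $m_{\Ucal}$ and the same closure-is-an-ideal argument to finish. The only cosmetic difference is that you argue neighbourhood-by-neighbourhood rather than via a net, and you make the ``closure of an ideal is an ideal, and contains $1$'' step fully explicit, which the paper leaves partly to the statement of the proposition.
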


\begin{proof}
Take the net $\{\alpha^V\}_{V\in\Ucal}$, where
\[
	\alpha^V_{\upsilon}=\begin{cases}
	0\qquad\text{ if }\upsilon\in V &\\
	1\qquad\text{ if }\upsilon\notin V\end{cases}
\]
and $\alpha^{V_1}\geq\alpha^{V_2}\Leftrightarrow V_1\subseteq V_2$. It is clear from the definition of $m_{\Ucal}$ that $\alpha^V\in m_{\Ucal}$ for every $V\in \Ucal$.

Let $N$ be an open neighbourhood of $1\in\A_K$. We can assume, without loss of generality, that it is a basic open set, that is, of the form $N=\prod N_{\upsilon}$, where each $N_{\upsilon}$ is an open neighbourhood of $1\in K_{\upsilon}$ and $N_{\upsilon}=\mathfrak{o}_{\upsilon}$ for almost every $\upsilon$. If we take $V_0=\{\upsilon\in \Omega_{S_0} \:\vert\: N_{\upsilon}=\mathfrak{o}_{\upsilon}\}$, we have:

\begin{enumerate}
\item $V_0\in\Ucal$, because $\Ucal$ contains every cofinite subset of $\Omega_{S_0}$.

\item If $\alpha^V\geq\alpha^{V_0}$ (that is, $V\subseteq V_0$) for some $V\in\Ucal$, then $\alpha^V\in N$.
\end{enumerate}

This means that the net $\{\alpha^V\}_{V\in\Ucal}$ has limit $1\in\A_K$, and therefore $m_{\Ucal}$ is dense in $\A_K$.

Now, since every prime ideal not of the form $M_{\upsilon_0}$ contains $m_{\Ucal}$ for some non-principal ultrafilter $\Ucal$ in $\Omega_{S_0}$, it must also be dense in $\A_K$.
\end{proof}

\section{Restriction of ultrafilters under surjective maps with uniformly bounded fibres}
\label{sec:ultrafiltmap}

In the next sections we will examine how, for a finite extension of number fields $L/K$, the prime ideals of the $\A_K$-algebra $\A_L$ behave relative to those of $\A_K$. Before that we need to develop some results about the behaviour of ultrafilters under extension.

Let $X$ and $Y$ be two sets and $\pi:Y\rightarrow X$ a map between them. Extend $\pi$ to a map from $\mathcal{P}(Y)$ to $\mathcal{P}(X)$ via direct images, $\pi(W)=\{\pi(y)\:\vert\: y\in W\}$. Then we can define, for an ultrafilter $\tilde{\Ucal}$ on $Y$,
\[
	  \pi\left(\tilde{\Ucal}\right)
	= \left\{\pi(W)\:\big\vert\: W\in\tilde{\Ucal}\right\}.
\]
It is not true, in general, that $\pi(\tilde{\Ucal})$ is an ultrafilter on $X$, and it may not even be a filter. This changes if $\pi$ is surjective.

\begin{proposition}\label{pro:ultramap1}
Let $X,Y$ be two sets and $\pi:Y\rightarrow X$ a surjective map. For every ultrafilter $\tilde{\Ucal}$ on $Y$, $\pi(\tilde{\Ucal})$ is an ultrafilter on $X$. Furthermore, for every ultrafilter $\Ucal$ on $X$ there is at least one ultrafilter $\tilde{\Ucal}$ on $Y$ such that $\Ucal=\pi(\tilde{\Ucal})$. 
\end{proposition}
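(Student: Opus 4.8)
The plan is to sidestep the awkwardness of the direct-image definition by comparing it with the preimage pushforward. Set
\[
	\pi_*(\tilde{\Ucal})=\{A\subseteq X\:\vert\:\pi^{-1}(A)\in\tilde{\Ucal}\}.
\]
First I would prove the identity $\pi(\tilde{\Ucal})=\pi_*(\tilde{\Ucal})$. For the inclusion $\subseteq$, given $W\in\tilde{\Ucal}$ one has $W\subseteq\pi^{-1}(\pi(W))$, so upward closure of $\tilde{\Ucal}$ forces $\pi^{-1}(\pi(W))\in\tilde{\Ucal}$, i.e.\ $\pi(W)\in\pi_*(\tilde{\Ucal})$. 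For $\supseteq$, if $\pi^{-1}(A)\in\tilde{\Ucal}$ then $A=\pi(\pi^{-1}(A))$ exhibits $A$ as the direct image of a set in $\tilde{\Ucal}$; here surjectivity of $\pi$ is exactly what guarantees $\pi(\pi^{-1}(A))=A$. This identity is the crux of the argument: the direct-image family is not visibly closed under finite intersections (direct images do not commute with $\cap$), whereas the pushforward is transparently well behaved.

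With the identity in hand, the first assertion reduces to checking that $\pi_*(\tilde{\Ucal})$ satisfies the four conditions of Definition~\ref{dfn:ultrafilter}, each of which follows from the fact that $\pi^{-1}$ commutes with Boolean operations. Concretely, $\pi^{-1}(\varnothing)=\varnothing\notin\tilde{\Ucal}$ gives condition (1); $\pi^{-1}(A\cap B)=\pi^{-1}(A)\cap\pi^{-1}(B)$ together with closure of $\tilde{\Ucal}$ under intersection gives condition (2); monotonicity of $\pi^{-1}$ gives condition (3); and $\pi^{-1}(X\setminus A)=Y\setminus\pi^{-1}(A)$ combined with condition (4) for $\tilde{\Ucal}$ gives condition (4) for $\pi_*(\tilde{\Ucal})$.

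For the second assertion I would argue by lifting. Given an ultrafilter $\Ucal$ on $X$, consider the family $\mathcal{F}=\{\pi^{-1}(A)\:\vert\:A\in\Ucal\}$ on $Y$. Since $\pi$ is surjective and no member of $\Ucal$ is empty, every $\pi^{-1}(A)$ is nonempty, and $\pi^{-1}(A)\cap\pi^{-1}(B)=\pi^{-1}(A\cap B)$ shows $\mathcal{F}$ is closed under finite intersections; hence $\mathcal{F}$ has the finite intersection property. By the ultrafilter lemma, $\mathcal{F}$ extends to an ultrafilter $\tilde{\Ucal}$ on $Y$. Then for every $A\in\Ucal$ we have $\pi^{-1}(A)\in\mathcal{F}\subseteq\tilde{\Ucal}$, so $A\in\pi_*(\tilde{\Ucal})=\pi(\tilde{\Ucal})$; thus $\Ucal\subseteq\pi(\tilde{\Ucal})$.

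Finally, since both sides are ultrafilters on $X$, an inclusion between them must be an equality: if some $A$ lay in $\pi(\tilde{\Ucal})$ but not in $\Ucal$, then $X\setminus A\in\Ucal\subseteq\pi(\tilde{\Ucal})$ would put both $A$ and its complement into the ultrafilter $\pi(\tilde{\Ucal})$, contradicting condition (1). Hence $\pi(\tilde{\Ucal})=\Ucal$. The only non-elementary ingredient is the ultrafilter lemma invoked in this last step; the genuine obstacle, which I resolve at the outset, is the passage between the direct-image and preimage descriptions of $\pi(\tilde{\Ucal})$, and this is precisely where surjectivity is indispensable.
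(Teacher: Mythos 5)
Your argument is correct, but it takes a genuinely different route from the paper's in both halves. For the first assertion the paper simply declares it ``easy to check''; your identity $\pi(\tilde{\Ucal})=\pi_*(\tilde{\Ucal})$, where $\pi_*(\tilde{\Ucal})=\{A\subseteq X\:\vert\:\pi^{-1}(A)\in\tilde{\Ucal}\}$, is a clean way to make that verification transparent, since the direct-image family is not visibly closed under intersections while the preimage operation commutes with all the Boolean operations; the one place surjectivity enters, $\pi(\pi^{-1}(A))=A$, is correctly isolated. For the lifting assertion the paper does not extend a filter: it chooses a section $x\mapsto x^{(1)}$ of $\pi$ and takes $\Ucal^{(1)}=\{W\subseteq Y\:\vert\:\exists V\in\Ucal\text{ with }V^{(1)}\subseteq W\}$, i.e.\ the pushforward of $\Ucal$ along the section, verifying the ultrafilter axioms directly. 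Your route --- observe that $\{\pi^{-1}(A)\:\vert\:A\in\Ucal\}$ has the finite intersection property, extend it to an ultrafilter $\tilde{\Ucal}$, and use maximality to upgrade $\Ucal\subseteq\pi(\tilde{\Ucal})$ to equality --- is shorter and rests only on the ultrafilter lemma, which is weaker than the full choice needed to select a point in every (possibly infinite) fibre. What the paper's explicit construction buys is reuse: the ultrafilters $\Ucal^{(1)},\dots,\Ucal^{(n)}$ obtained from the $n$ different sections are precisely the objects counted in Propositions~\ref{pro:ultramap2} and~\ref{pro:ultramap3}, so the existence proof there doubles as the enumeration of all lifts, which your nonconstructive extension does not by itself provide.
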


\begin{proof}
It is easy to check that $\pi(\tilde{\Ucal})$ is an ultrafilter on $X$ if $\tilde{\Ucal}$ is an ultrafilter on $Y$. Now, let $\Ucal$ be an ultrafilter on $X$. For every $x\in X$, we fix $x^{(1)}\in\pi^{-1}(x)$. For $V\subseteq X$, define $V^{(1)}=\{x^{(1)}\:\vert\: x\in V\}\subseteq Y$ and
\[
	  \Ucal^{(1)}
	= \{W\subseteq Y\:\vert\: \exists V\in\Ucal\text{ with }V^{(1)}\subseteq W\}.
\]
Clearly $\pi(\Ucal^{(1)})=\Ucal$ as subsets of $\mathcal{P}(X)$, so we only have to prove that $\Ucal^{(1)}$ is an ultrafilter on $Y$. It is immediate to check that $\emptyset\notin\Ucal^{(1)}$ and that $\Ucal^{(1)}$ is closed under upward inclusions. Given $V_1,V_2\subseteq X$, $V_1^{(1)}\cap V_2^{(1)}=(V_1\cap V_2)^{(1)}$, so $\Ucal^{(1)}$ is also closed under finite intersection. Now take $W\subseteq Y$ and $V=\{x\in X\:\vert\: x^{(1)}\in W\}$. Then $V^{(1)}\subseteq W$ and $(X\setminus V)^{(1)}\subseteq Y\setminus W$, and since either $V$ or $X\setminus V$ are in $\Ucal$, then either $W$ or $Y\setminus W$ are in $\Ucal^{(1)}$.
\end{proof}

\begin{lemma}\label{lem:setpartition}
Let $X$ be a set and $P_1,...,P_N\in\mathcal{P}(X)\setminus\{\emptyset\}$ a finite partition of $X$, that is, $X=P_1\cup...\cup P_N$ and $P_i\cap P_j=\emptyset$ if $i\neq j$. If $\Ucal$ is an ultrafilter in $X$, $\Ucal$ contains exactly one of the sets $P_1,...,P_N$.
\end{lemma}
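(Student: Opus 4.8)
The plan is to prove the two halves of ``exactly one'' separately, each by a short contradiction argument using only the defining axioms of an ultrafilter from Definition~\ref{dfn:ultrafilter}, specialized to the Boolean algebra $(\mathcal{P}(X),\cap,\cup)$, where the complement $\neg A$ is $X\setminus A$ and $0$ is $\emptyset$.

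For the uniqueness half (at most one), I would suppose that two distinct blocks $P_i$ and $P_j$ both lie in $\Ucal$. Since $\Ucal$ is closed under finite intersection by axiom (2), their intersection $P_i\cap P_j$ would then belong to $\Ucal$; but the partition hypothesis gives $P_i\cap P_j=\emptyset$, contradicting axiom (1), which states $\emptyset\notin\Ucal$. Hence no two blocks can simultaneously lie in $\Ucal$.

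For the existence half (at least one), I would argue by contradiction as well: assume that none of the $P_i$ belongs to $\Ucal$. Axiom (4) then forces each complement $\neg P_i=X\setminus P_i$ to lie in $\Ucal$, and iterating the finite-intersection closure of axiom (2) shows that $\bigcap_{i=1}^N (X\setminus P_i)\in\Ucal$. Since the blocks cover $X$, this intersection equals $X\setminus\bigcup_{i=1}^N P_i=\emptyset$, again contradicting axiom (1). Therefore at least one block lies in $\Ucal$, and combined with the uniqueness half this yields exactly one.

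I do not expect a genuine obstacle here: the statement is purely formal and follows directly from the ultrafilter axioms. The only point requiring mild care is to phrase the existence step with the single $N$-fold intersection, relying on the fact that closure under finite intersections in axiom (2) iterates to any finite family; doing so keeps the argument independent of whether $\Ucal$ is principal or free and of whether $X$ is finite or infinite.
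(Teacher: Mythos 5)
Your proof is correct and follows essentially the same route as the paper's: uniqueness by intersecting two blocks to get $\emptyset\in\Ucal$, and existence by intersecting the complements of all blocks to again reach $\emptyset\in\Ucal$. The paper states this in two terse sentences; your version merely spells out the same argument in full.
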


\begin{proof}
If there were more than one, their intersection would be empty. If there were none, their set complements would be in $\Ucal$, and their intersection would be empty.
\end{proof}

\begin{proposition}\label{pro:ultramap2}
Let $\pi:Y\rightarrow X$ be a surjective map between two sets such that there is an $n\in\mathbb{N}$ with $\#\pi^{-1}(x)=n$ for every $x\in X$. For every ultrafilter $\Ucal$ in $X$ there are exactly $n$ ultrafilters in $Y$ that restrict to $\Ucal$ via $\pi$.
\end{proposition}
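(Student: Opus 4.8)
The plan is to produce $n$ explicit ultrafilters on $Y$ restricting to $\Ucal$ and then show that every ultrafilter restricting to $\Ucal$ is one of them. To build the candidates I would enumerate the fibres simultaneously: since $\#\pi^{-1}(x)=n$ for every $x$, fix for each $x\in X$ an enumeration $\pi^{-1}(x)=\{x^{(1)},\dots,x^{(n)}\}$, which is the same as choosing $n$ sections of $\pi$ at once. For $k\in\{1,\dots,n\}$ and $V\subseteq X$ write $V^{(k)}=\{x^{(k)}\:\vert\: x\in V\}$, and apply the construction in the proof of Proposition~\ref{pro:ultramap1} verbatim, with $x^{(1)}$ replaced by $x^{(k)}$, to obtain
\[
	\Ucal^{(k)}=\{W\subseteq Y\:\vert\:\exists V\in\Ucal\text{ with }V^{(k)}\subseteq W\}.
\]
By that proposition each $\Ucal^{(k)}$ is an ultrafilter on $Y$ with $\pi(\Ucal^{(k)})=\Ucal$, so I immediately have $n$ ultrafilters restricting to $\Ucal$.

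Next I would verify that these are pairwise distinct, which is where Lemma~\ref{lem:setpartition} enters. The sets $X^{(1)},\dots,X^{(n)}$ form a partition of $Y$, since every $y\in Y$ equals $x^{(k)}$ for $x=\pi(y)$ and exactly one index $k$. Taking $V=X$ in the defining condition shows $X^{(k)}\in\Ucal^{(k)}$, while by Lemma~\ref{lem:setpartition} any ultrafilter on $Y$ contains exactly one block of the partition. Hence the $\Ucal^{(k)}$ are distinguished by which block they contain, and there are at least $n$ ultrafilters restricting to $\Ucal$.

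The main work, and the step I expect to be the real obstacle, is showing there are no others. Let $\tilde\Ucal$ be any ultrafilter on $Y$ with $\pi(\tilde\Ucal)=\Ucal$. By Lemma~\ref{lem:setpartition} it contains exactly one block, say $X^{(k_0)}$, and I would argue that $\tilde\Ucal=\Ucal^{(k_0)}$. The key point is that $\pi(\tilde\Ucal)=\Ucal$ forces $\pi^{-1}(V)\in\tilde\Ucal$ for every $V\in\Ucal$: otherwise $\pi^{-1}(X\setminus V)=Y\setminus\pi^{-1}(V)\in\tilde\Ucal$, and applying $\pi$, using surjectivity so that $\pi(\pi^{-1}(X\setminus V))=X\setminus V$, would put $X\setminus V$ into $\Ucal$, contradicting $V\in\Ucal$. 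Granting this, for any $W\in\Ucal^{(k_0)}$ choose $V\in\Ucal$ with $V^{(k_0)}\subseteq W$; then $V^{(k_0)}=\pi^{-1}(V)\cap X^{(k_0)}$ is the intersection of two members of $\tilde\Ucal$, hence lies in $\tilde\Ucal$, and upward closure yields $W\in\tilde\Ucal$. Thus $\Ucal^{(k_0)}\subseteq\tilde\Ucal$, and since two ultrafilters related by inclusion must coincide (a strict containment would force some $W$ together with its complement $Y\setminus W$ to belong to the larger one, whose intersection is empty), we get $\tilde\Ucal=\Ucal^{(k_0)}$. Combined with the previous paragraph this gives exactly $n$ ultrafilters restricting to $\Ucal$. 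The delicate points to get right are the stability identity $\pi^{-1}(V)\in\tilde\Ucal$ for $V\in\Ucal$ and the equality $V^{(k_0)}=\pi^{-1}(V)\cap X^{(k_0)}$; everything else is formal.
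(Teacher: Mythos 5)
Your proof is correct and takes essentially the same route as the paper: the same candidate ultrafilters $\Ucal^{(k)}$ built from simultaneous sections of $\pi$, and the same use of Lemma~\ref{lem:setpartition} applied to the partition $Y=X^{(1)}\cup\dots\cup X^{(n)}$ to pin down which candidate a given $\tilde{\Ucal}$ must equal. The only difference is that you establish the inclusion $\Ucal^{(k_0)}\subseteq\tilde{\Ucal}$ (via the identities $\pi^{-1}(V)\in\tilde{\Ucal}$ and $V^{(k_0)}=\pi^{-1}(V)\cap X^{(k_0)}$) whereas the paper argues the reverse inclusion $\tilde{\Ucal}\subseteq\Ucal^{(k_0)}$; both finish by maximality, and your version of this last step is stated slightly more carefully.
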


\begin{proof}
For every $x\in X$, let $\pi^{-1}(x)=\{x^{(1)},...,x^{(n)}\}$. Given an ultrafilter $\Ucal$ on $X$, let $\Ucal^{(1)},...,\Ucal^{(n)}$ be the ultrafilters in $Y$ defined as in the proof of Proposition~\ref{pro:ultramap1}. These are $n$ distinct ultrafilters in $Y$ (because $V^{(i)}\cap V^{(j)}=\emptyset$ if $i\neq j$) and $\pi(\Ucal^{(i)})=\Ucal$ for $i=1,...,n$. Let us prove that there are no others.

Let $\tilde{\Ucal}$ be an ultrafilter in $Y$ such that $\pi(\tilde{\Ucal})=\Ucal$. Let $\tilde{V}\in\tilde{\Ucal}$. By lemma \ref{lem:setpartition}, there is exactly one $i=1,...,n$ such that $X^{(i)}\in\tilde{\Ucal}$. Since
\[
	\pi(\tilde{V})^{(i)}\subseteq\tilde{V}\cap X^{(i)}\subseteq \tilde{V}
\]
and $\pi(\tilde{V})^{(i)}\in\tilde{\Ucal}^{(i)}$, we have $\tilde{V}\in\Ucal^{(i)}$, so $\tilde{\Ucal}\subseteq\Ucal^{(i)}$ and, by maximality of ultrafilters, $\tilde{\Ucal}=\Ucal^{(i)}$.
\end{proof}

\begin{proposition}\label{pro:ultramap3}
Let $\pi:Y\rightarrow X$ be a surjective map between two sets such that its fibres are uniformly bounded by $n\in\mathbb{N}$ (that is, for every $x\in X$ we have $\#\pi^{-1}(x)\leq n$). For every ultrafilter $\Ucal$ on $X$ there are at most $n$ ultrafilters on $Y$ that restrict to $\Ucal$ via $\pi$.
\end{proposition}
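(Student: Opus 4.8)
The plan is to reduce to the equal-fibre case already handled in Proposition~\ref{pro:ultramap2} by partitioning $X$ according to fibre size. For each $k\in\{1,\dots,n\}$ set $X_k=\{x\in X\:\vert\:\#\pi^{-1}(x)=k\}$; discarding the empty ones, the nonempty $X_k$ form a finite partition of $X$ into nonempty subsets. By Lemma~\ref{lem:setpartition}, the ultrafilter $\Ucal$ contains exactly one of them, say $X_m$ with $m\le n$.

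Next I would show that every ultrafilter $\tilde{\Ucal}$ on $Y$ with $\pi(\tilde{\Ucal})=\Ucal$ satisfies $Y_m\in\tilde{\Ucal}$, where $Y_m=\pi^{-1}(X_m)$. Indeed, since $X_m\in\Ucal=\pi(\tilde{\Ucal})$, there is some $W\in\tilde{\Ucal}$ with $\pi(W)=X_m$; then $W\subseteq\pi^{-1}(X_m)=Y_m$, and upward closure gives $Y_m\in\tilde{\Ucal}$. Thus all the ultrafilters restricting to $\Ucal$ are concentrated on $Y_m$.

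Then I would pass to traces on $X_m$ and $Y_m$. An ultrafilter on a set that contains a fixed subset is the same datum as an ultrafilter on that subset, via the trace $\Ucal\mapsto\{U\cap X_m\:\vert\: U\in\Ucal\}$, a correspondence of the same nature as the one in Lemma~\ref{lem:ultrafiltersfinitepoints}. Under this identification $\Ucal$ corresponds to an ultrafilter $\Ucal_m$ on $X_m$, and the ultrafilters $\tilde{\Ucal}$ on $Y$ restricting to $\Ucal$ correspond exactly to the ultrafilters on $Y_m$ restricting, via the restricted map $\pi_m\colon Y_m\rightarrow X_m$, to $\Ucal_m$. By construction $\pi_m$ is surjective with every fibre of cardinality exactly $m$, so Proposition~\ref{pro:ultramap2} yields precisely $m$ such ultrafilters. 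Hence there are exactly $m\le n$ ultrafilters on $Y$ restricting to $\Ucal$, which proves the bound.

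The step requiring the most care is the compatibility asserted in the third paragraph: one must verify that the trace bijection between ultrafilters on $Y$ containing $Y_m$ and ultrafilters on $Y_m$ intertwines $\pi$-restriction with $\pi_m$-restriction, and similarly on the $X$-side, so that counting the preimages of $\Ucal$ reduces to counting the preimages of $\Ucal_m$. Everything else is a routine transcription of the equal-fibre argument.
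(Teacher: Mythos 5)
Your argument is correct, and it takes a genuinely different route from the paper's. The paper goes in the opposite direction: it \emph{pads} every fibre up to exactly $n$ elements with dummy copies, forming an auxiliary set $\tilde{Y}=\bigsqcup_x W_x$ with a collapsing surjection $p:\tilde{Y}\rightarrow Y$ and $\tilde{\pi}=\pi\circ p$, then applies Proposition~\ref{pro:ultramap2} to $\tilde{\pi}$ to get exactly $n$ ultrafilters upstairs and Proposition~\ref{pro:ultramap1} to $p$ to see that every ultrafilter on $Y$ lying over $\Ucal$ is the image of one of these $n$, whence the bound. You instead \emph{stratify} $X$ by fibre cardinality, use Lemma~\ref{lem:setpartition} to locate the unique stratum $X_m\in\Ucal$, observe that every ultrafilter over $\Ucal$ concentrates on $Y_m=\pi^{-1}(X_m)$, and reduce by taking traces to the equal-fibre map $\pi_m:Y_m\rightarrow X_m$. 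The compatibility you flag in your last paragraph does check out in both directions (given $\mathcal{V}$ on $Y_m$ over $\Ucal_m$ and $U\in\Ucal$, the set $W=V\cup\pi^{-1}(U\setminus X_m)$ with $\pi_m(V)=U\cap X_m$ witnesses $U\in\pi(\tilde{\Ucal})$), so the deferral is legitimate. Your approach buys a sharper conclusion: the number of ultrafilters over $\Ucal$ is \emph{exactly} $m$, where $X_m$ is the fibre-size stratum belonging to $\Ucal$, rather than merely at most $n$; the paper's padding construction is arguably lighter on bookkeeping but loses this exact count. One cosmetic point: your trace correspondence is for an arbitrary subset $X_m$ carried by the ultrafilter, which is a mild generalization of Lemma~\ref{lem:ultrafiltersfinitepoints} (stated there only for cofinite complements), so it deserves the one-line justification you sketch rather than a bare citation.
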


\begin{proof}
For $x\in X$, set $m_x=\#\pi^{-1}(x)$ and fix an ordering of the fibre of $x$, $\pi^{-1}(x)=\{x^{(1)},...,x^{(m_x)}\}$. For every $x\in X$ we complete $\pi^{-1}(x)$ to an $n$-element set
\[
	W_x=\{x^{(1)},...,x^{(m_x)},x^{(m_x+1)},...,x^{(n)}\},
\]
by adding arbitrary elements $x^{(m_x+1)},...,x^{(n)}$ (for example, copies of $x^{(1)}$). Now consider the disjoint union
\[
	\tilde{Y}=\underset{ x\in X}\bigsqcup W_x
\]
and define $p:\tilde{Y}\rightarrow Y$ as
\begin{enumerate}
\item $p(x^{(i)})=x^{(i)}$ if $i\leq m_x$.

\item $p(x^{(i)})=x^{(1)}$ if $i>m_x$.
\end{enumerate}

Let $\tilde{\pi}=\pi\circ p:\tilde{Y}\rightarrow X$ so that $\tilde{\pi}^{-1}(x)=W_x$. We can apply Proposition~\ref{pro:ultramap2} to $\tilde{\pi}$ and Proposition~\ref{pro:ultramap1} to $p$, so that, given an ultrafilter $\Ucal$ in $X$, the ultrafilters in $Y$ that restrict to $\Ucal$ via $\pi$ are the restriction to $Y$ via $p$ of the ultrafilters in $\tilde{Y}$ that restrict to $\Ucal$ via $\tilde{\pi}$, of which there are $n$.
\end{proof}

In the case of a finite extension of number fields $L/K$, this last result will help us get an upper bound for the number of prime ideals in $\A_L$ above a given prime ideal in $\A_K$. We need one more result, that is a consequence of lemma \ref{lem:setpartition}:

\begin{lemma}\label{lem:onepointperfibre}
Let $\pi:Y\rightarrow X$ as in Proposition~\ref{pro:ultramap3} and $\tilde{\Ucal}$ an ultrafilter on $Y$. For every $\tilde{W}\in\tilde{\Ucal}$ there is some $W\in\tilde{\Ucal}$ with $W\subseteq\tilde{W}$ and such that $W$ contains at most one point from every fibre of $\pi$.
\end{lemma}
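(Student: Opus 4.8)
The plan is to partition $Y$ into finitely many sets, each of which meets every fibre of $\pi$ in at most one point, and then to use Lemma~\ref{lem:setpartition} to extract from $\tilde{\Ucal}$ a single such set; intersecting it with $\tilde{W}$ will produce the desired $W$.

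First I would fix, exactly as in the proof of Proposition~\ref{pro:ultramap3}, an ordering $\pi^{-1}(x)=\{x^{(1)},\dots,x^{(m_x)}\}$ of each fibre, where $m_x=\#\pi^{-1}(x)\leq n$. For each index $i\in\{1,\dots,n\}$ I set
\[
	Y^{(i)}=\{x^{(i)}\:\vert\: x\in X,\: m_x\geq i\}\subseteq Y,
\]
the set of all ``$i$-th points'' of the fibres. Since every $y\in Y$ lies in a unique fibre and carries a unique index within that fibre, the nonempty members of $\{Y^{(1)},\dots,Y^{(n)}\}$ form a finite partition of $Y$. Here the uniform bound $n$ on the fibre sizes is exactly what guarantees that this partition has only finitely many blocks, which is what makes Lemma~\ref{lem:setpartition} applicable.

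Next I would apply that lemma to this partition: the ultrafilter $\tilde{\Ucal}$ contains exactly one of these sets, say $Y^{(i_0)}$. The crucial observation is that $Y^{(i_0)}$ meets every fibre of $\pi$ in at most one point, because $Y^{(i_0)}\cap\pi^{-1}(x)$ equals $\{x^{(i_0)}\}$ when $m_x\geq i_0$ and is empty otherwise.

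Finally I would put $W=\tilde{W}\cap Y^{(i_0)}$. As an intersection of two members of $\tilde{\Ucal}$ it lies in $\tilde{\Ucal}$; it is contained in $\tilde{W}$; and being a subset of $Y^{(i_0)}$ it contains at most one point from every fibre, as required. The construction is otherwise entirely elementary; the only point needing care is to discard the empty $Y^{(i)}$ before invoking Lemma~\ref{lem:setpartition}, whose hypothesis requires the blocks of the partition to be nonempty.
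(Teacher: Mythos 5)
Your proof is correct and follows essentially the same route as the paper: the paper also partitions $Y$ by the index of each point within its (ordered) fibre, invokes Lemma~\ref{lem:setpartition} to find the unique block $X^{(i)}$ lying in $\tilde{\Ucal}$, and intersects with $\tilde{W}$. Your version is just slightly more explicit about fibres of varying size and about discarding empty blocks before applying the partition lemma.
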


\begin{proof}
Since $\tilde{\Ucal}$ is closed under finite intersections, the assertion is equivalent to the existence of a set $W\in\tilde{\Ucal}$ such that $W$ contains at most one point from every fibre of $\pi$, and we can take the set $X^{(i)}$, with $i=1,...,n$ the only index such that $X^{(i)}\in\tilde{\Ucal}$, by lemma \ref{lem:setpartition}.
\end{proof}

\section{Extension of prime ideals in finite extensions}
\label{sec:finiteext}

If $L/K$ is a finite extension of number fields of degree $n$, it is well known (see \cite{CaFr}) that $\A_L$ is a free $\A_K$-algebra of rank $n$, and in fact $\A_L\simeq \A_K\otimes_K L$. The $\A_{K}$-algebra monomorphism $\A_K\hookrightarrow \A_L$ induces a morphism
\[
	\begin{array}{ccc}
	\Spec(\A_L) & \longrightarrow & \Spec(\A_K) \\
	\mathfrak{B} & \longmapsto & \mathfrak{B}\cap\A_K.
	\end{array}
\]

We will now study the fibres of this morphism using the description of $\Spec(\A_K)$ given in section \ref{sec:primesadeles}. For simplicity we will assume that our base field is $\Q$ and consider a finite extension $K/\Q$ of degree $n\in\mathbb{N}$. Let $X_{\Q}=\{\infty\}\cup\{p\in\P\}$ be the set of places in $\Q$, with starting set $S_0=\{\infty\}$ and $\Omega = X_{\Q}\setminus S_0$. Let $X_K$ be the set of places in $K$, and $\tilde{S}_0$ the set of Archimedean places, with $\tilde{\Omega}=X_K\setminus\tilde{S}_0$. We have a restriction map $r=r_{K/\Q}:X_K\rightarrow X_{\Q}$ for which $\tilde{S}_0=r^{-1}(S_0)$. The basic theory of valuations (see \cite{CaFr}, \cite{Ram}) tells us that $r$ is a map with finite fibres, with $\#\tilde{S}_0=s_1+s_2$ (where $s_1$ is the number of real embeddings of $K$ and $s$ is the number of pairs of conjugate complex embeddings) and $\#r^{-1}(p)=g_p$ (where $g_p$ is the number of prime ideals in the decomposition of the ideal $(p)\subseteq\mathfrak{o}_K$). The map $r:\tilde{\Omega}\rightarrow\Omega$ gives us, according to Proposition~\ref{pro:ultramap3}, a surjective map between the ultrafilters on $\tilde{\Omega}$ and the ultrafilters on $\Omega$ with finite fibres uniformly bounded by $n$.

\begin{proposition}\label{pro:maxextension}
For the maximal and minimal ideals of $\A_K$, we have:
\begin{enumerate}
\item Given $\upsilon\in X_K$, $M_{\upsilon}\cap\A_{\Q}=M_{r(\upsilon)}$.

\item If $\tilde{\Ucal}$ is a non-principal ultrafilter in $\tilde{\Omega}$, $M_{\tilde{\Ucal}}\cap\A_{\Q}=M_{r(\tilde{\Ucal})}$, and $m_{\tilde{\Ucal}}\cap\A_{\Q}=m_{r(\tilde{\Ucal})}$.
\end{enumerate}

Therefore, every maximal (resp. minimal) prime ideal of $\A_{\Q}$ has some and at most $n$ maximal (resp. minimal) prime ideals in $\A_K$ lying above it.
\end{proposition}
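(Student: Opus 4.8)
The plan is to make the embedding $\A_\Q\hookrightarrow\A_K$ explicit place by place and then reduce both displayed identities to the pushforward behaviour of ultrafilters under $r$ developed in Section~\ref{sec:ultrafiltmap}.

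First I would record the one piece of genuine arithmetic input. Via the isomorphism $K\otimes_\Q\Q_p\simeq\prod_{\upsilon\mid p}K_\upsilon$, the image in $\A_K$ of an adele $a=(a_p)\in\A_\Q$ is the adele $\beta=(\beta_\upsilon)$ whose component $\beta_\upsilon$ is the image of $a_{r(\upsilon)}$ under the local embedding $\Q_{r(\upsilon)}\hookrightarrow K_\upsilon$. Because $\upsilon$ restricts on $\Q_{r(\upsilon)}$ to a positive multiple of the $r(\upsilon)$-adic valuation, this local embedding respects the relevant conditions: $\beta_\upsilon=0\iff a_{r(\upsilon)}=0$ at every place, and $\beta_\upsilon\in\m_\upsilon\iff a_{r(\upsilon)}\in\m_{r(\upsilon)}$ for $\upsilon\in\tilde{\Omega}$. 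I expect this componentwise compatibility to be the main obstacle, since it is the only step that uses the structure of the extension $K/\Q$; everything afterwards is ultrafilter bookkeeping.

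With this in hand, (1) is immediate: $a\in M_\upsilon\cap\A_\Q$ iff $\beta_\upsilon=0$ iff $a_{r(\upsilon)}=0$ iff $a\in M_{r(\upsilon)}$. For (2) I would first prove the elementary equivalence that, for the surjection $r\colon\tilde{\Omega}\to\Omega$ and any ultrafilter $\tilde{\Ucal}$ on $\tilde{\Omega}$, one has $r^{-1}(A)\in\tilde{\Ucal}\iff A\in r(\tilde{\Ucal})$: the forward direction uses $r(r^{-1}(A))=A$, and the converse holds because otherwise $r^{-1}(\Omega\setminus A)\in\tilde{\Ucal}$ would place both $A$ and its complement into the ultrafilter $r(\tilde{\Ucal})$. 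Writing $A=\{p\in\Omega\mid a_p\in\m_p\}$, the compatibility above gives $\{\upsilon\in\tilde{\Omega}\mid\beta_\upsilon\in\m_\upsilon\}=r^{-1}(A)$, so $a\in M_{\tilde{\Ucal}}\cap\A_\Q$ iff $r^{-1}(A)\in\tilde{\Ucal}$ iff $A\in r(\tilde{\Ucal})$ iff $a\in M_{r(\tilde{\Ucal})}$; running the same argument with the condition $a_p=0$ in place of $a_p\in\m_p$ settles $m_{\tilde{\Ucal}}$.

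Finally I would assemble the count from Theorem~\ref{thm:specAK}. Fix a maximal ideal of $\A_\Q$. If it is the principal ideal $M_p$, then by (1) the maximal ideals of $\A_K$ above it are exactly the $M_\upsilon$ with $r(\upsilon)=p$, and no $M_{\tilde{\Ucal}}$ lies above it because $r(\tilde{\Ucal})$ is again non-principal (a principal $r(\tilde{\Ucal})$ would, by the equivalence above, force the finite fibre $r^{-1}(\{p\})$ into $\tilde{\Ucal}$); their number is the number of places of $K$ above $p$, which lies between $1$ and $n$. If it is $M_\Ucal$ for a non-principal $\Ucal$, then no principal $M_\upsilon$ can lie above it, and the ideals above it are precisely the $M_{\tilde{\Ucal}}$ with $r(\tilde{\Ucal})=\Ucal$ (all such $\tilde{\Ucal}$ being non-principal since $\Ucal$ is); these are in bijection with the ultrafilters on $\tilde{\Omega}$ restricting to $\Ucal$, of which there is at least one by Proposition~\ref{pro:ultramap1} and at most $n$ by Proposition~\ref{pro:ultramap3}. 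The minimal case runs identically, using $m_{\tilde{\Ucal}}$ in place of $M_{\tilde{\Ucal}}$ and the fact that each $M_\upsilon$ is simultaneously maximal and minimal, which gives the stated bounds.
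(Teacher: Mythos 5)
Your proof is correct and follows essentially the same route as the paper: make the embedding $\A_{\Q}\hookrightarrow\A_K$ explicit place by place, reduce both identities to the pushforward of ultrafilters along $r$, and count fibres using Propositions~\ref{pro:ultramap1} and~\ref{pro:ultramap3}. The only (harmless) difference is that where the paper invokes Lemma~\ref{lem:onepointperfibre} to compare $M_{\tilde{\Ucal}}\cap\A_{\Q}$ with $M_{r(\tilde{\Ucal})}$, you use the equivalent characterization $A\in r(\tilde{\Ucal})\iff r^{-1}(A)\in\tilde{\Ucal}$ (valid since $r$ is surjective), and you spell out the final counting argument, which the paper leaves implicit.
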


\begin{proof}
The first part is easy to prove. For the second, using Lemma~\ref{lem:onepointperfibre} we can caracterize the elements in $M_{\tilde{\Ucal}}$ and $m_{\tilde{\Ucal}}$ using sets of $\tilde{\Ucal}$ that have at most one point in each fibre of $r$. This gives us $M_{r(\tilde{\Ucal})}\subseteq M_{\tilde{U}}\cap\A_{\Q}$, and the same for the corresponding minimal prime ideal. The opposite inclusion is trivial, and we have completed our proof.
\end{proof}

\begin{proposition}\label{pro:primeextension}
If $\mathfrak{p}$ is a prime ideal of $\A_{\Q}$ inside the maximal ideal $M_{\Ucal}$ for a non-principal ultrafilter $\Ucal$ of $\Omega$, and $\tilde{\Ucal}$ is an ultrafilter on $\tilde{\Omega}$ such that $r(\tilde{\Ucal})=\Ucal$, there is exactly one prime ideal $\tilde{\mathfrak{p}}$ in $M_{\tilde{\Ucal}}$ with $\tilde{\mathfrak{p}}\cap \A_{\Q}=\mathfrak{p}$, and $\tilde{\mathfrak{p}}$ is given by
\[
	\tilde{\mathfrak{p}}=\underset{\beta\in\mathfrak{p}}\bigcup \mathfrak{p}_{\tilde{\Ucal}}^{\beta}.
\]
\end{proposition}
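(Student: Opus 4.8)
The plan is to establish existence and uniqueness separately, with the explicit formula emerging from the uniqueness argument. Throughout I would use normalized valuations (surjecting onto $\mathbb{Z}$), so that for a place $\upsilon$ of $K$ above $p$ one has $\upsilon|_{\Q_p}=e_{\upsilon}\,p$ and $p\big(N_{K_{\upsilon}/\Q_p}(x)\big)=f_{\upsilon}\,\upsilon(x)$, where $e_{\upsilon}f_{\upsilon}=n_{\upsilon}$ and $\sum_{\upsilon\mid p}n_{\upsilon}=n$.

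For existence I would first check directly that $\tilde{\mathfrak{p}}:=\bigcup_{\beta\in\mathfrak{p}}\mathfrak{p}_{\tilde{\Ucal}}^{\beta}$ is a prime ideal of $\A_K$ contained in $M_{\tilde{\Ucal}}$ with $\tilde{\mathfrak{p}}\cap\A_{\Q}=\mathfrak{p}$. Since $\mathfrak{p}\subseteq M_{\Ucal}$ and $r(\tilde{\Ucal})=\Ucal$, every $\beta\in\mathfrak{p}$ lies in $M_{\tilde{\Ucal}}$, so each $\mathfrak{p}_{\tilde{\Ucal}}^{\beta}$, and hence the union, sits inside $M_{\tilde{\Ucal}}$. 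Each $\mathfrak{p}_{\tilde{\Ucal}}^{\beta}$ is an ideal, so $\tilde{\mathfrak{p}}$ is closed under multiplication by $\A_K$; for closure under addition, given $\beta_1,\beta_2\in\mathfrak{p}$ I would produce $\beta_3\in\mathfrak{p}$ whose component valuations are the pointwise minima of those of $\beta_1,\beta_2$, by setting $\beta_3=\chi\beta_1+(1-\chi)\beta_2$ where $\chi\in\A_{\Q}$ is the $\{0,1\}$-valued adele supported on $\{p:p(\beta_{1,p})\le p(\beta_{2,p})\}$. A short valuation estimate then shows $a+b\in\mathfrak{p}_{\tilde{\Ucal}}^{\beta_3}$ whenever $a\in\mathfrak{p}_{\tilde{\Ucal}}^{\beta_1}$ and $b\in\mathfrak{p}_{\tilde{\Ucal}}^{\beta_2}$. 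Primeness is then immediate, since if $ab\in\mathfrak{p}_{\tilde{\Ucal}}^{\beta}$ then one of $a,b$ lies in the prime $\mathfrak{p}_{\tilde{\Ucal}}^{\beta}$, hence in $\tilde{\mathfrak{p}}$. Finally $\tilde{\mathfrak{p}}\cap\A_{\Q}=\mathfrak{p}$: the inclusion $\supseteq$ is clear, and for $\subseteq$ one translates the defining inequalities of $\mathfrak{p}_{\tilde{\Ucal}}^{\beta}$ for an element of $\A_{\Q}$ through the relation $\upsilon|_{\Q_p}=e_{\upsilon}p$, landing back in $\mathfrak{p}_{\Ucal}^{\beta}\subseteq\mathfrak{p}$ by Theorem~\ref{thm:specAK}.

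For uniqueness, let $\tilde{\mathfrak{q}}\subseteq M_{\tilde{\Ucal}}$ be any prime with $\tilde{\mathfrak{q}}\cap\A_{\Q}=\mathfrak{p}$. The inclusion $\tilde{\mathfrak{p}}\subseteq\tilde{\mathfrak{q}}$ is easy: $\mathfrak{p}\subseteq\tilde{\mathfrak{q}}$ and $\mathfrak{p}_{\tilde{\Ucal}}^{\beta}$ is the smallest prime in $M_{\tilde{\Ucal}}$ containing $\beta$, so $\mathfrak{p}_{\tilde{\Ucal}}^{\beta}\subseteq\tilde{\mathfrak{q}}$ for all $\beta\in\mathfrak{p}$. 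The reverse inclusion $\tilde{\mathfrak{q}}\subseteq\tilde{\mathfrak{p}}$ is the heart of the matter, and I expect it to be the main obstacle: the natural element of $\A_{\Q}$ attached to $\tilde{\gamma}\in\tilde{\mathfrak{q}}$, its norm, mixes all places above a given $p$ and is therefore too large to control the needed inequality. The fix is to shrink the support first. Since $\tilde{\gamma}\in M_{\tilde{\Ucal}}$, the set $\{\upsilon:\upsilon(\tilde{\gamma}_{\upsilon})>0\}$ lies in $\tilde{\Ucal}$, and by Lemma~\ref{lem:onepointperfibre} I may pass to some $Y\in\tilde{\Ucal}$ meeting each fibre of $r$ in at most one point. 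Replacing $\tilde{\gamma}$ by the adele $\tilde{\gamma}'$ equal to $\tilde{\gamma}_{\upsilon}$ on $Y$ and to $1$ off $Y$ alters it only by an element of $m_{\tilde{\Ucal}}\subseteq\tilde{\mathfrak{q}}\cap\tilde{\mathfrak{p}}$, so it suffices to treat $\tilde{\gamma}'$. Now $\tilde{\gamma}'$ satisfies its degree-$n$ characteristic polynomial over $\A_{\Q}$, and its constant term $\beta:=(-1)^{n}N_{\A_K/\A_{\Q}}(\tilde{\gamma}')$ lies in $\tilde{\mathfrak{q}}\cap\A_{\Q}=\mathfrak{p}$.

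The payoff of the one-point-per-fibre reduction is that for $p\in r(Y)\in\Ucal$ the local norm collapses to the single factor $N_{K_{\upsilon(p)}/\Q_p}(\tilde{\gamma}_{\upsilon(p)})$ at the unique $\upsilon(p)\in Y$ above $p$, so that $\upsilon(\beta_{\upsilon})=n_{\upsilon}\,\upsilon(\tilde{\gamma}_{\upsilon})\le n\,\upsilon(\tilde{\gamma}_{\upsilon})=\upsilon\big((\tilde{\gamma}')^{n}_{\upsilon}\big)$ for $\upsilon=\upsilon(p)\in Y$, using the uniform bound $n_{\upsilon}\le n$ and $\upsilon(\tilde{\gamma}_{\upsilon})\ge 0$. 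This gives $\tilde{\gamma}'\in\mathfrak{p}_{\tilde{\Ucal}}^{\beta}$ with the set $Y$ and exponent $n$, hence $\tilde{\gamma}\in\tilde{\mathfrak{p}}$ and $\tilde{\mathfrak{q}}=\tilde{\mathfrak{p}}$. Without the truncation $\tilde{\gamma}\mapsto\tilde{\gamma}'$ the extra factors coming from the other places above $p$ would spoil this inequality, which is precisely why Lemma~\ref{lem:onepointperfibre} is the decisive ingredient.
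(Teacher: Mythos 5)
Your proof is correct, and its overall skeleton matches the paper's: reduce uniqueness to showing that any prime $\tilde{\mathfrak{q}}\subseteq M_{\tilde{\Ucal}}$ contracting to $\mathfrak{p}$ equals $\bigcup_{\beta\in\mathfrak{p}}\mathfrak{p}_{\tilde{\Ucal}}^{\beta}$, get the easy inclusion from minimality of the $\mathfrak{p}_{\tilde{\Ucal}}^{\beta}$, and for the hard inclusion invoke Lemma~\ref{lem:onepointperfibre} to replace the support of $\tilde{\gamma}$ by a set meeting each fibre of $r$ at most once. Where you genuinely diverge is in how the rational adele $\beta\in\mathfrak{p}$ is manufactured from $\tilde{\gamma}$. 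The paper simply transplants components: it sets $\beta_u=\tilde{\beta}_{\upsilon}$ for the unique $\upsilon\in W$ over $u$ and $0$ elsewhere, then asserts $\beta\in\mathfrak{b}$ and $\mathfrak{p}_{\tilde{\Ucal}}^{\tilde{\beta}}=\mathfrak{p}_{\tilde{\Ucal}}^{\beta}$; this is shorter, but as written it requires interpretation (an element of $K_{\upsilon}$ is not literally an element of $\Q_p$, so one must replace it by a $p$-adic number of comparable valuation and absorb the ramification index into the exponent $n$ in the definition of $\mathfrak{p}_{\tilde{\Ucal}}^{\beta}$), and membership of $\beta$ in $\mathfrak{b}$ is a separate check. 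Your route — truncate to $1$ off the one-point-per-fibre set $Y$ so the local norms do not mix places, then take $\beta=(-1)^nN_{\A_K/\A_{\Q}}(\tilde{\gamma}')$ — buys you that $\beta$ lies in $\A_{\Q}$ and in $\tilde{\mathfrak{q}}$ automatically (Cayley–Hamilton), with the comparison of valuations reduced to the clean inequality $n_{\upsilon}\upsilon(\tilde{\gamma}_{\upsilon})\le n\,\upsilon(\tilde{\gamma}_{\upsilon})$; the price is the explicit bookkeeping with $e_{\upsilon}f_{\upsilon}=n_{\upsilon}$ and the (correct and necessary) choice of $1$ rather than $0$ off $Y$. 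You also supply the directedness argument showing that $\bigcup_{\beta\in\mathfrak{p}}\mathfrak{p}_{\tilde{\Ucal}}^{\beta}$ is a prime ideal, which the paper leaves implicit. Both arguments rest on the same decisive ingredient, Lemma~\ref{lem:onepointperfibre}, and both are sound.
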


\begin{proof}
It is clear that the prime ideal $\tilde{\mathfrak{p}}$ defined as above restricts to $\mathfrak{p}$. We only need to prove uniqueness, and for that we will prove that, given a prime ideal $\mathfrak{B}$ in $\A_K$ inside the maximal ideal $M_{\tilde{\Ucal}}$ for a non-principal ultrafilter $\tilde{\Ucal}$ in $\tilde{\Omega}$ and $\mathfrak{b}=\mathfrak{B}\cap\A_{\Q}$, then
\[
	\mathfrak{B}=\underset{\beta\in\mathfrak{b}}\bigcup \mathfrak{p}_{\tilde{\Ucal}}^{\beta}.
\]
The inclusion $\supseteq$ is easy, since $\mathfrak{b}\subseteq\mathfrak{B}$. Take $\Ucal=r(\tilde{\Ucal})$ and  $\tilde{\beta}\in\mathfrak{B}$. The set $\tilde{W}=\{\upsilon\in\tilde{\Omega}\:\vert\: \tilde{\beta}_{\upsilon}\in\mathfrak{m}_{\upsilon}\}$ is in $\tilde{U}$, and by Lemma~\ref{lem:onepointperfibre} there is a set $W$ in $\tilde{U}$ inside $\tilde{W}$ having at most one point in each fibre of $r$. Take $V=r(W)$ and $\beta\in\A_{\Q}$ defined by
\begin{enumerate}
\item $\beta_u=\tilde{\beta}_{\upsilon}$ if $u\in V$ and $\upsilon$ is the point in $W$ above $u$.

\item $\beta_u=0$ if $u\notin V$.
\end{enumerate}
It is straightforward to check that $\beta\in\mathfrak{b}$ and $\mathfrak{p}_{\tilde{\Ucal}}^{\tilde{\beta}}=\mathfrak{p}_{\tilde{\Ucal}}^{\beta}$ (because $\beta$ and $\tilde{\beta}$ have the same components in a set of $\tilde{\Ucal}$), so we have the equality we wanted.
\end{proof}

\begin{corollary}\label{cor:fibreprime}
If $\Q\hookrightarrow K$ is a finite extension of degree $n\in\mathbb{N}$, the morphism $\Spec(\A_K)\rightarrow\Spec(\A_{\Q})$ is surjective and has finite fibres, uniformly bounded by $n$.
\end{corollary}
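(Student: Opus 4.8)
The plan is to combine the explicit classification of $\Spec(\A_\Q)$ from Theorem~\ref{thm:specAK} with the two counting inputs already in hand: the bound on lifted ultrafilters (Proposition~\ref{pro:ultramap3}) and the fact that each lifted ultrafilter carries \emph{exactly one} prime above a given $\mathfrak{p}$ (Proposition~\ref{pro:primeextension}). I would dispatch surjectivity first by running through the cases of Theorem~\ref{thm:specAK}. For a principal $M_{\upsilon_0}$ with $\upsilon_0$ a finite place of $\Q$, surjectivity of $r$ on places gives some $\upsilon\in X_K$ with $r(\upsilon)=\upsilon_0$, and then $M_\upsilon\cap\A_\Q=M_{\upsilon_0}$ by Proposition~\ref{pro:maxextension}(1). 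For a non-principal $\Ucal$ on $\Omega$, Proposition~\ref{pro:ultramap1} supplies an ultrafilter $\tilde\Ucal$ on $\tilde\Omega$ with $r(\tilde\Ucal)=\Ucal$ (necessarily non-principal, since $r$ has finite fibres and a principal $\tilde\Ucal$ pushes forward to a principal ultrafilter); then $M_{\tilde\Ucal}\cap\A_\Q=M_\Ucal$ and $m_{\tilde\Ucal}\cap\A_\Q=m_\Ucal$ by Proposition~\ref{pro:maxextension}(2), while every intermediate prime is hit by Proposition~\ref{pro:primeextension}. This exhausts the list in Theorem~\ref{thm:specAK}, so the morphism is onto.

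For the fibre bound, the organizing principle I would use is that every prime $\mathfrak{B}$ of $\A_K$ is contained in a \emph{unique} maximal ideal, which is $M_\upsilon$ for a finite place $\upsilon$ or $M_{\tilde\Ucal}$ for a non-principal ultrafilter $\tilde\Ucal$; call the corresponding ultrafilter the one \emph{associated} to $\mathfrak{B}$. The key preliminary step is that restriction is compatible with pushforward of the associated ultrafilter: if $\mathfrak{B}$ lies in $M_{\tilde\Ucal}$ and $\mathfrak{B}\cap\A_\Q=\mathfrak{p}$, then intersecting $M_{\tilde\Ucal}\supseteq\mathfrak{B}$ with $\A_\Q$ gives $M_{r(\tilde\Ucal)}\supseteq\mathfrak{p}$ by Proposition~\ref{pro:maxextension}, and since $\mathfrak{p}$ sits inside a unique maximal ideal of $\A_\Q$, that ideal is forced to equal $M_{r(\tilde\Ucal)}$. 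Hence the ultrafilter associated to any preimage of $\mathfrak{p}$ restricts, via $r$, to the ultrafilter $\Ucal$ associated to $\mathfrak{p}$.

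This lets me decompose the fibre over $\mathfrak{p}$ as a disjoint union, indexed by the ultrafilters $\tilde\Ucal$ on $\tilde\Omega$ with $r(\tilde\Ucal)=\Ucal$, of the sets of preimages of $\mathfrak{p}$ contained in $M_{\tilde\Ucal}$. Proposition~\ref{pro:primeextension} identifies restriction as a bijection between the primes of $\A_K$ inside $M_{\tilde\Ucal}$ and those of $\A_\Q$ inside $M_\Ucal$, so each index contributes exactly one preimage; and Proposition~\ref{pro:ultramap3} bounds the number of indices by $n$. Thus the fibre has at most $n$ elements. The principal case $\mathfrak{p}=M_{\upsilon_0}$ either falls under the same bookkeeping (the relevant $\tilde\Ucal$ being the principal ultrafilters at the places $\upsilon\in r^{-1}(\upsilon_0)$) or is checked directly: its preimages are exactly the $M_\upsilon$ with $r(\upsilon)=\upsilon_0$, of which there are $\#r^{-1}(\upsilon_0)\le n$.

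I expect the main obstacle to lie in the bookkeeping of the last two paragraphs rather than in any single hard estimate: one must ensure that the set of preimages of $\mathfrak{p}$ is partitioned \emph{cleanly} by the associated ultrafilter, so that the factor of $n$ from Proposition~\ref{pro:ultramap3} and the uniqueness-per-ultrafilter from Proposition~\ref{pro:primeextension} multiply to the bound $n$ rather than overcounting. The compatibility statement—that $r$ carries the ultrafilter associated to $\mathfrak{B}$ to the one associated to $\mathfrak{B}\cap\A_\Q$—is the linchpin making this partition well defined, and it rests squarely on the uniqueness of the maximal ideal above each prime recorded in Theorem~\ref{thm:specAK}.
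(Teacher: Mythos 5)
Your argument is correct and is exactly the assembly the paper intends: the corollary is stated without proof as an immediate consequence of Propositions~\ref{pro:maxextension}, \ref{pro:primeextension} and \ref{pro:ultramap3}, and your bookkeeping (unique maximal ideal over each prime, compatibility of the associated ultrafilter with $r$, one preimage per lifted ultrafilter, at most $n$ lifted ultrafilters) is the intended route. No gaps; you have simply written out the details the paper leaves implicit.
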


As we stated at the beginning of the section, the base field need not be $\Q$, and the same results, with the same proofs, are valid for any finite extension of number fields $L/K$.

\section{Generalizations}
\label{sec:general}

Throughout this paper we have worked in the adele ring of an algebraic number ring. However, the techniques we have used and the results obtained can be used in more general settings.

Take $K$ to be the function field of an algebraic curve over a finite field, that is, a finite extension of an extension of trascendence degree $1$ of a finite field $\mathbb{F}_q$. These fields, together with algebraic number fields, are called \textit{global fields}, and are studied together because they share many similar properties: primarily, they are the only two kinds of fields with finite residue fields at the non-Archimedean places (see \cite{ArtinW}). The adele ring $\A_K$ of these function fields $K$ can be defined in the exact same way as that of a number field, and its study is also mostly the same (see \cite{Cass}, \cite{Ram} or \cite{TaPhD}). This means that $\Spec(\A_K)$ can be computed just as in the case of a number field, and we obtain the same results.

We can also consider the adele ring of function fields of curves that are not necessarily over finite fields (see \cite{Porr}), for example). If $k$ is a perfect field and $X$ a smooth, complete and connected curve over $k$, we can define the adele ring $\A_X$ of $X$ as the adele ring of its function field $\Sigma_X$. If $k$ is not a finite field, this adele ring $\A_X$ differs slightly but significantly from the global field case: the most important difference is that, topologically, $\A_X$ is no longer locally compact. However, the similarities are enough for our treatment of the prime ideals to apply here as well, so we can effectively describe $\Spec(\A_X)$, and the results about extensions and topology are still true, because they do not require local compactness.


\begin{thebibliography}{MM}

\bibitem{ArtinW} Artin, E.; Whaples, G., \textit{Axiomatic characterization of fields by the product formula for valuations}, Bull. Am. Math. Soc. \textbf{51 } (1945), pp. 469--492.


\bibitem{Cass} Cassels, J. W. S., \textit{Global fields}, in Algebraic Number Theory (Proc. Instructional Conf., Brighton, 1965) pp. 42--84 Thompson, Washington, D.C.

\bibitem{CaFr} Cassels, J. W. S.; Fröhlich, A., \textit{Algebraic Number Theory} (Proc. Instructional Conf., Brighton, 1965) Thompson, Washington, D.C.

\bibitem{Conn} Connes, A.; Consani, C. (2011). ``The hyperring of adèle classes.'' \textit{Journal of Number Theory, 131}(2), 159–194. \url{https://doi.org/10.1016/j.jnt.2010.09.001}

\bibitem{Fino} Finocchiaro, Carmelo. ``Prime Ideals in Infinite Products of Commutative Rings.'' \textit{arXiv.org},
Cornell University Library, arXiv.org, Sept. 2020,
\url{http://search.proquest.com/docview/2440759397/}.



\bibitem{Porr} Mu\~noz Porras, J., Navas Vicente, L., Pablos Romo, F., \& Plaza Mart\'in, F. (2020). ``An idelic quotient related to Weil reciprocity and the Picard group.'' \textit{Collectanea Mathematica (Barcelona), 71}(1), 151–171. \url{https://doi.org/10.1007/s13348-019-00252-7}

\bibitem{Ram} Ramakrishnan, D.; Valenza, J. L., \textit{Fourier Analysis on Number Fields}, Graduate Texts in Mathematics, Springer-Verlag, New York, 1999, ISBN: 978-1-4757-3087-6




\bibitem{TaPhD} Tate, J. T., \textit{Fourier analysis in number fields, and Hecke's zeta-functions}, 1967 Algebraic Number Theory (Proc. Instructional Conf., Brighton, 1965) pp. 305--347 Thompson, Washington, D.C.





\end{thebibliography}
\end{document}